\newcommand{\ep} {\epsilon}
\newcommand{\gm} {\gamma}
\newcommand{\ii }{\infty}
\newcommand{\ol} {\overline}
\newcommand{\dt }{\delta}
\newcommand{\lb }{\lambda}
\newcommand{\al} {\alpha}
\newcommand{\bt} {\beta}
\newcommand{\su} {\subset}
\newcommand{\mc} {\mathcal}
\newcommand{\LM} {L_0(\mc M,\tau)}
\newcommand{\PM} {\mc P(\mc M)}
\newtheorem{teo}{Theorem}[section]
\newtheorem{pro}{Proposition}[section]
\newtheorem{cor}{Corollary}[section]
\newtheorem{lm}{Lemma}[section]
\theoremstyle{definition}
\newtheorem{rem}{Remark}[section]
\title{Ergodic theorems in fully symmetric spaces \\ of $\tau-$measurable operators}
\keywords{Semifinite von Neumann algebra, maximal ergodic inequality, noncommutative ergodic theorem,
bounded Besicovitch sequence, noncommutative fully symmetric space, Boyd indices}
\subjclass[2010]{47A35(primary), 46L52(secondary)}
\begin{document}
\date{February 8, 2015}
\begin{abstract}
In \cite{jx}, employing the technique of noncommutative interpolation,
a maximal ergodic theorem in noncommutative $L_p-$spaces, $1<p<\ii$, was established
and, among other things, corresponding maximal ergodic inequalities and individual ergodic theorems were
derived.
In this article, we derive maximal ergodic inequalities
in noncommutative $L_p-$spaces directly from \cite{ye} and apply them to prove corresponding individual and Besicovitch
weighted ergodic theorems.
Then we extend these results to noncommutative fully symmetric Banach spaces with Fatou property and
non-trivial Boyd indices, in particular, to noncommutative Lorentz spaces $L_{p,q}$.
Norm convergence of ergodic averages in noncommutative fully symmetric Banach spaces is also studied.

\end{abstract}

\author{Vladimir Chilin}
\address{National University of Uzbekistan\\ Tashkent,  700174, Uzbekistan}
\email{chilin@ucd.uz}

\author{Semyon Litvinov}
\address{Pennsylvania State University\\ Hazleton, PA 18202, USA}
\email{snl2@psu.edu}

\maketitle

\section{Preliminaries and introduction}

Let $\mc H$ be a Hilbert space over $\mathbb C$,
$B(\mc H)$ the algebra of all bounded linear operators in $\mc H$,
$\| \cdot \|_{\ii}$ the uniform norm in $B(\mc H)$, $\Bbb I$ the identity in $B(\mc H)$. If
$\mc M\su B(\mc H)$ is a von Neumann algebra, denote by $\PM=\{ e\in \mc M: \ e=e^2=e^*\}$ the complete lattice
of all projections in $\mc M$. For every $e\in \PM$ we write $e^{\perp}=\Bbb I-e$.
If $\{ e_i\}_{i\in I}\su \PM$, the projection on the subspace
$\bigcap \limits_{i\in I}e_i(\mc H)$ is denoted by $\bigwedge \limits_{i\in I}e_i$.

A linear operator $x: \mc D_x\to \mc H$, where the domain $\mc D_x$ of $x$ is a linear subspace of $\mc H$,
 is said to be {\it affiliated with the algebra $\mc M$} if
$yx\subseteq xy$ for every $y$ from the commutant of $\mc M$.

Assume now that $\mc M$ is a semifinite von Neumann algebra equipped with a faithful
normal semifinite trace $\tau$. A densely-defined closed linear operator $x$ affiliated with $\mc M$
is called {\it $\tau$-measurable} if for each $\ep >0$
there exists such $e\in \PM$ with $\tau (e^{\perp})\leq \ep$ that $e(\mc H)\su \mc D_x$.
Let us denote by $\LM$ the set of all $\tau$-measurable operators.

It is well-known \cite{se} that if $x,y\in \LM$, then the operators $x+y$ and $xy$ are densely-defined
and preclosed. Moreover, the closures $\overline{x+y}$ (the strong sum) and $\overline{xy}$ (the strong product)
and $x^*$ are also $\tau$-measurable and, equipped with these operations, $\LM$ is a unital $*$-algebra
over $\Bbb C$.

For every subset If $X\su \LM$, the set of all self-adjoint operators in $X$ is denoted by $X^h$, whereas the set of all positive
operators in $X$ is denoted by $X^+$. The partial order $\leq$ in $L_0^h(\mc M,\tau)$ is defined by the cone $L_0^+(\mc M,\tau)$.

The topology defined in $\LM$ by the family
$$
V(\ep,\dt)=\{ x\in \LM : \|xe\|_{\ii}\leq \dt \text { \ for some } e\in \PM \text {\ with } \tau (e^{\perp})\leq \ep \}
$$
$$
\left (W(\ep,\dt)=\{ x\in \LM : \|exe\|_{\ii}\leq \dt \text { \ for some } e\in \PM \text {\ with } \tau (e^{\perp})\leq \ep \} \right ),
$$
$\ep>0, \dt>0$, of (closed) neighborhoods of zero is called the {\it measure topology} (resp.,
the {\it bilaterally measure topology}). It is said that a sequence $\{ x_n\} \su \LM$ converges to
$x\in \LM$ in measure (bilaterally in measure) if this sequence converges to $x$ in measure topology
(resp., in bilaterally measure topology). It is known \cite[Theorem 2.2]{cls} that $x_n\to x$ in measure
if and only if $x_n\to x$ bilaterally in measure. For basic properties of the measure topology in $\LM$, see \cite{ne}.

A sequence $\{ x_n\}\su \LM$ is said to converge to $x\in \LM$ {\it almost uniformly (a.u.)}
({\it bilaterally almost uniformly (b.a.u.)}) if for every $\ep>0$ there exists such $e\in \PM$
that $\tau(e^{\perp})\leq \ep$ and $\| (x-x_n)e\|_{\ii}\to 0$ (resp.,  $\| e(x-x_n)e\|_{\ii}\to 0$).
It is clear that every a.u. convergent (b.a.u. convergent) to $x$ sequence in $\LM$ converges
to $x$ in measure (resp., bilaterally in measure, hence in measure).

For a positive self-adjoint operator $x=\int_{0}^{\infty}\lambda de_{\lambda}$ affiliated with $\mathcal M$ one can define
$$
\tau(x)=\sup_{n}\tau \left (\int_{0}^{n}\lambda de_{\lambda}\right )=\int_{0}^{\infty} \lb d\tau(e_{\lambda}).
$$
If $1\leq p< \infty$, then the {\it noncommutative $L_p-$space associated with $(\mathcal M, \tau)$} is defined as
$$
L_p=( L_p(\mc M,\tau), \| \cdot \|_p) =\{ x\in \LM: \| x\|_p=(\tau (| x|^p))^{1/p}<\ii \},
$$
where $|x|=(x^*x)^{1/2}$, the absolute value of $x$ (see \cite{ye1}). Naturally, $L_{\ii}=(\mc M, \| \cdot \|_{\ii})$.
If $x_n,x\in L_p$ and $\| x-x_n\|_p\to 0$, then $x_n\to x$ in measure \cite[Theorem 3.7]{fk}. Besides, utilizing
the spectral decomposition of $x\in L_p^+$, it is possible to find a sequence $\{ x_n\}\su L_p^+\cap \mc M$
such that $0\leq x_n\leq x$ for each $n$ and $x_n \uparrow x$; in particular, $\| x_n\|_p\leq \| x\|_p$ for all $n$
and $\| x-x_n\|_p\to 0$.

Let $T: L_1\cap \mc M \to L_1\cap \mc M$ be a positive linear map that satisfies conditions of \cite{ye}:
$$
(Y) \ \ \  T(x)\leq \Bbb I \text{ \ and \ } \tau(T(x))\leq \tau(x) \ \ \forall \ x\in L_1\cap \mc M \text{ \ with\ } 0\leq x\leq \Bbb I.
$$
It is known \cite[Proposition 1]{ye} that such a $T$ admits a unique positive ultraweakly continuous linear extension
$T:\mc M\to \mc M$. In fact, $T$ contracts $\mc M$:

\begin{pro}\label{p0}
Let $T$ be the extension to $\mc M$ of a positive linear map \\ $T: L_1\cap \mc M \to L_1\cap \mc M$ satisfying condition (Y).
Then $\| T(x)\|_{\ii}\leq \|x \|_{\ii}$ for every $x\in \mc M$.
\end{pro}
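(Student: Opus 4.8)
The plan is to transport condition (Y) from $L_1\cap\mc M$ up to all of $\mc M$ by exploiting the normality of the extension, and then to reduce the norm estimate first to self-adjoint and finally to unitary operators. First I would record that the extension $T:\mc M\to\mc M$ is positive and ultraweakly continuous, hence normal, so it preserves suprema of bounded increasing nets in $\mc M^+$. Given $x\in\mc M$ with $0\le x\le\Bbb I$, semifiniteness of $\tau$ furnishes projections $e_n\in\PM$ with $\tau(e_n)<\ii$ and $e_n\uparrow\Bbb I$; then $y_n=x^{1/2}e_nx^{1/2}$ satisfies $0\le y_n\le x\le\Bbb I$, $y_n\uparrow x$, and $\tau(y_n)=\tau(e_nx)\le\tau(e_n)<\ii$, so $y_n\in L_1\cap\mc M$. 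Condition (Y) gives $T(y_n)\le\Bbb I$, and normality yields $T(x)=\sup_nT(y_n)\le\Bbb I$, while positivity gives $T(x)\ge0$. Thus $0\le T(x)\le\Bbb I$ whenever $0\le x\le\Bbb I$ in $\mc M$; in particular $T(\Bbb I)\le\Bbb I$. For self-adjoint $x$ with $\|x\|_{\ii}\le1$ I would write $x=x_+-x_-$ with $0\le x_\pm\le\Bbb I$, so that $-\Bbb I\le-T(x_-)\le T(x)\le T(x_+)\le\Bbb I$ and hence $\|T(x)\|_{\ii}\le1$; homogeneity then gives $\|T(x)\|_{\ii}\le\|x\|_{\ii}$ for every self-adjoint $x\in\mc M$.

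The hard part will be the passage to non-self-adjoint $x$: the crude splitting $x=a+ib$ with $a=(x+x^*)/2$ and $b=(x-x^*)/(2i)$ only delivers $\|T(x)\|_{\ii}\le2\|x\|_{\ii}$, which falls short of the claim. To recover the sharp constant I would proceed in two steps. For a unitary $u\in\mc M$, the $C^*$-subalgebra generated by $u$ and $\Bbb I$ is abelian, and a positive linear map on an abelian $C^*$-algebra is automatically completely positive; restricting $T$ to this subalgebra thus makes it $2$-positive, so applying the amplified map to the positive matrix $\left(\begin{smallmatrix}\Bbb I & u\\ u^* & \Bbb I\end{smallmatrix}\right)$ yields $\left(\begin{smallmatrix}T(\Bbb I)&T(u)\\ T(u^*)&T(\Bbb I)\end{smallmatrix}\right)\ge0$, and together with $T(\Bbb I)\le\Bbb I$ this forces $T(u)^*T(u)\le T(\Bbb I)\le\Bbb I$, i.e. $\|T(u)\|_{\ii}\le1$. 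Since $T$ is positive it is bounded, hence norm-continuous, so the Russo--Dye theorem---that the closed unit ball of $\mc M$ is the closed convex hull of its unitaries---upgrades $\|T(u)\|_{\ii}\le1$ to $\|T(x)\|_{\ii}\le1$ for every $x\in\mc M$ with $\|x\|_{\ii}\le1$. Rescaling gives $\|T(x)\|_{\ii}\le\|x\|_{\ii}$ for all $x\in\mc M$.

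I expect the only genuinely delicate point to be this last, non-self-adjoint step; it can alternatively be dispatched by invoking directly the Russo--Dye--Gardner fact that a positive linear map $\phi$ on a unital $C^*$-algebra satisfies $\|\phi\|=\|\phi(\Bbb I)\|_{\ii}$, which collapses the whole statement onto the single inequality $T(\Bbb I)\le\Bbb I$ proved in the first step. Semifiniteness enters only in that first step, where it is precisely what allows (Y), a hypothesis about $L_1\cap\mc M$, to be carried over to all of $\mc M$ through the normality of the extension.
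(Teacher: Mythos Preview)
Your proof is correct and follows essentially the same route as the paper. Both arguments reduce the claim to $\|T(\Bbb I)\|_\ii\le 1$ (you in fact obtain the slightly stronger $T(\Bbb I)\le\Bbb I$) by approximating $\Bbb I$ with finite-trace projections and using ultraweak continuity of the extension, and then both invoke the Russo--Dye--type fact that a positive linear map $\phi$ on a unital $C^*$-algebra satisfies $\|\phi\|=\|\phi(\Bbb I)\|$: the paper simply cites this as \cite[Corollary~2.9]{pa}, whereas you first spell out the unitary/Russo--Dye reduction before noting exactly this shortcut as an alternative. One minor technical remark: a general semifinite $\mc M$ need not be $\sigma$-finite, so the increasing family of finite-trace projections converging to $\Bbb I$ should be taken as a net rather than a sequence, as the paper does; your argument goes through unchanged with this adjustment.
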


\begin{proof}  Since the trace $\tau$ is semifinite, there exists a net $\{ p_{\al}\}_{\al \in \Lambda}\su \PM$, where $\Lambda$ is a base of neighborhoods of zero of the ultraweak topology ordered by inclusion,
such that $0<\tau(p_{\al})<\ii$ for erery $\al$ and $p_{\al}\to \Bbb I$ ultraweakly. Then
$T(x_{\al}) \to T(\Bbb I)$ ultraweakly. Since
$\| T(p_{\al})\|_{\ii}\leq 1$, and the unit ball of $\mc M$ is closed in ultraweak topology, we
conclude that $\| T(\Bbb I)\|_{\ii}\leq 1$. Therefore, by \cite[Corollary 2.9]{pa},
$$
\|T\|_{\mc M\to \mc M}=\|T(\Bbb I)\|_{\ii}\leq 1.
$$
\end{proof}

In \cite[Theorem 4.1]{jx}, a maximal ergodic theorem in noncommutative $L_p-$spaces, $1<p<\ii$, was established
for the class of positive linear maps $T:\mc M \to \mc M$ satisfying the condition
$$
(JX)  \ \ \  \| T(x)\|_{\ii}\leq \|x\|_ {\ii} \ \ \forall \ x\in \mc M  \text{ \ and \ }  \tau(T(x))\leq \tau(x) \ \  \forall \ x\in L_1\cap \mc M^+.
$$

\begin{rem}
Due to Proposition \ref{p0}, (JX)$\ \Leftrightarrow \ $(Y).
\end{rem}
\noindent
Besides, by \cite[Lemma 1.1]{jx}, a positive linear map $T:\mc M \to \mc M$
that satisfies (JX) uniquely extends to a positive linear contraction $T$ in $L_p$, $1< p<\ii$.

In the sequel, we shall write $T\in DS^+=DS^+(\mc M,\tau)$ to indicate that the map $T: L_1+\mc M\to L_1+\mc M$
is the unique positive linear extension of a positive linear map $T: \mc M\to \mc M$ satisfying condition (JX).
Such $T$  is often called {\it positive Dunford-Schwartz transformation} (see, for example, \cite{ye2}).

\vskip 5pt
Assume that $T \in DS^+$ and form its ergodic averages:
\begin{equation}\label{eq1}
M_n=M_n(T)=\frac 1 {n+1} \sum_{k=0}^n T^k, \ n=1,2, ... \ .
\end{equation}

The following fundamental result provides a maximal ergodic inequality  in $L_1$ for the averages (\ref{eq1}).

\begin{teo}
\label{Theorem 1}\label{t1} \cite{ye}
If $T\in DS^+$, then for every $x\in L_1^+$ and $\ep>0$, there is such $e\in \PM$ that
$$
\tau (e^{\perp})\leq \frac {\| x\|_{1}} \ep \text { \ and \ } \sup_n \| eM_n(x)e\|_{\ii} \leq \ep .
$$
\end{teo}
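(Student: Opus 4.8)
The plan is to recognize this as the noncommutative analogue of the Hopf maximal ergodic theorem, whose weak-type $(1,1)$ bound carries the sharp constant $1$. The first reduction is to the case $x\in L_1^+\cap\mc M$: by the spectral theorem there are $x_m\in L_1^+\cap\mc M$ with $x_m\uparrow x$ and $\|x_m\|_1\leq\|x\|_1$, and since each $T\in DS^+$ is positive and normal, $M_n(x_m)\uparrow M_n(x)$ for every $n$. The second reduction reformulates the goal. Since $eM_n(x)e\leq\ep e$ is equivalent to $e(M_n(x)-\ep\Bbb I)e\leq0$, and since $y:=x-\ep\Bbb I$ satisfies $S_N(y):=\sum_{k=0}^N T^k y\geq(N+1)(M_N(x)-\ep\Bbb I)$ because $T^k\Bbb I\leq\Bbb I$, it suffices to produce a projection $e$ with $e\,S_N(y)\,e\leq0$ for all $N$ and $\tau(e^{\perp})\leq\|x\|_1/\ep$. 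Thus everything comes down to finding $e$ lying below the ``good region'' of every partial sum $S_N(y)$ at once.

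The commutative template that should guide the estimate is Garsia's filling scheme. One sets $u_0=0$, $u_{n+1}=(y+T(u_n))^+$, with support projections $q_n=\text{supp}(u_n)$, designed so that the support of $u_N$ captures the region where $\max_{n\leq N}S_n(y)>0$. The point is the identity $\tau(u_N)=\tau(q_N\,y\,q_N)+\tau\big(q_N\,T(u_{N-1})\,q_N\big)$, coming from $z^+=\chi_{(0,\ii)}(z)\,z\,\chi_{(0,\ii)}(z)$, which rearranges to $\tau(q_N\,y\,q_N)=\tau(u_N)-\tau\big(q_N\,T(u_{N-1})\,q_N\big)\geq\tau(u_N)-\tau(T(u_{N-1}))\geq\tau(u_N)-\tau(u_{N-1})$, using positivity of $T$, the estimate $\tau(q\,z\,q)\leq\tau(z)$ for $z\geq0$, and the trace contraction $\tau(T(u_{N-1}))\leq\tau(u_{N-1})$. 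If one knew $\tau(u_N)\geq\tau(u_{N-1})$, this would give $\tau(q_N\,y\,q_N)\geq0$, hence $\ep\,\tau(q_N)\leq\tau(q_N\,x\,q_N)\leq\|x\|_1$, and thus the sharp bound $\tau(q_N)\leq\|x\|_1/\ep$; taking $e^{\perp}=\bigvee_N q_N$ would finish the proof.

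The step I expect to be the genuine obstacle is precisely the monotonicity $\tau(u_N)\geq\tau(u_{N-1})$. In the commutative case it follows from $u_n\uparrow$, which rests on the map $z\mapsto z^+$ being monotone; but $z\mapsto z^+$ is \emph{not} operator monotone, so $u_n\uparrow$ fails in $\LM$ and the filling scheme does not transfer verbatim. The real work of Yeadon lies in circumventing this: either by constructing the projections directly and inductively, at each stage adjoining a spectral projection that compresses $M_n(x)$ below $\ep$ while charging its defect against the contraction $\tau(T(z))\leq\tau(z)$, or by first proving the estimate on a finite horizon $\max_{n\leq N}$, where the operators involved are bounded and the trace manipulations are controlled, and only afterwards letting $N\to\ii$. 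Keeping the optimal constant $1$ throughout any such substitute is the delicate point.

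Once a projection with $\tau(e^{\perp})\leq\|x\|_1/\ep$ and $e\,S_N(y)\,e\leq0$ (equivalently $\|eM_N(x)e\|_{\ii}\leq\ep$) is obtained for each finite horizon and each bounded $x_m$, the full statement follows by two limiting passages: letting the horizon $N\to\ii$ using weak-$*$ compactness of the unit ball of $\mc M$ together with completeness of the lattice $\PM$, and then letting $x_m\uparrow x$ using normality of $\tau$ and $M_n(x_m)\uparrow M_n(x)$. Since $\|x_m\|_1\leq\|x\|_1$, both passages preserve the trace bound $\|x\|_1/\ep$, yielding the theorem.
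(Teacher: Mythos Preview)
The paper does not prove this theorem at all: it is quoted from Yeadon's original article \cite{ye} and used as a black box (the foundational input for Theorem~\ref{t4} and everything downstream). So there is no ``paper's proof'' against which to compare your sketch.

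That said, your outline is an honest attempt to reconstruct Yeadon's argument, and you correctly isolate both the mechanism (a noncommutative Hopf--Garsia maximal lemma) and the genuine obstruction (non-monotonicity of $z\mapsto z^+$ in $L_0^h(\mc M,\tau)$, which breaks the filling scheme). As you yourself note, the crucial step---producing, with constant $1$, a single projection dominating all the ``bad'' supports at once---is not actually carried out in your proposal; you only indicate that Yeadon circumvents it by an inductive construction on finite horizons. So your write-up is a roadmap rather than a proof: the reductions to $x\in L_1^+\cap\mc M$ and to the inequality $eS_N(y)e\leq 0$ are fine, and the limiting passages at the end are routine, but the heart of the matter is deferred to \cite{ye}. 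Since the present paper also defers it there, your treatment is consistent with the paper's, just more detailed about what the cited result contains.
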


Here is a corollary of Theorem \ref{t1}, a noncommutative individual ergodic theorem of Yeadon:
\begin{teo}
\label{Theorem 2}\label{t2} \cite{ye}
If $T\in DS^+$, then for every $x\in L_1$ the averages $M_n(x)$ converge b.a.u.
to some $\widehat x\in L_1$.
\end{teo}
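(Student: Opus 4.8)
The plan is to deduce b.a.u.\ convergence from the maximal inequality of Theorem \ref{t1} by the noncommutative Banach principle: I would show that the set
$$
C=\{x\in L_1:\ \{M_n(x)\}\ \text{converges b.a.u.}\}
$$
is a closed linear subspace of $L_1$ and that it contains an $L_1$-dense subset. Since $M_n$ is linear and b.a.u.\ convergence is preserved under finite linear combinations (intersect the relevant projections), $C$ is automatically a linear subspace; the real work is to prove closedness via Theorem \ref{t1} and to exhibit a dense subset on which convergence is transparent.

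For the dense subset I would invoke the mean ergodic theorem. Because $T\in DS^+$ extends to a positive linear contraction of the Hilbert space $L_2$, the von Neumann mean ergodic theorem gives the orthogonal decomposition $L_2=\ker(\Bbb I-T)\oplus \ol{(\Bbb I-T)L_2}$, and on the dense subspace $\ker(\Bbb I-T)+(\Bbb I-T)L_2$ convergence of $M_n$ is elementary. For a fixed point $y$ one has $M_n(y)=y$ for all $n$, while for a coboundary $z-Tz$ one has
$$
M_n(z-Tz)=\frac1{n+1}\,(z-T^{n+1}z).
$$
Choosing the coboundary generators $z$ in $\mc M\cap L_2$, so that $\|T^{n+1}z\|_{\ii}\leq \|z\|_{\ii}$, this expression tends to $0$ in the uniform norm, hence a.u. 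Thus $M_n$ converges b.a.u.\ on an $L_2$-dense subset; combined with an $L_2$ maximal inequality (available since $T$ is an $L_2$ contraction) and the analogous Banach principle in $L_2$, this yields b.a.u.\ convergence for every $x\in L_2$, in particular on the $L_1$-dense subspace $L_1\cap L_2\su C$.

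The heart of the argument is the closedness of $C$ in $L_1$, where Theorem \ref{t1} enters. Given $x\in L_1$ and $\ep>0$, I would choose $y_j\in C$ with $\|x-y_j\|_1\leq \ep\,4^{-j}$ and set $w_j=x-y_j$. Decomposing each $w_j$ into four positive parts of $L_1$-norm controlled by $\|w_j\|_1$, applying Theorem \ref{t1} to each part with tolerance $2^{-j}$, and taking the infimum of the resulting projections together with projections $g_j\in\PM$ that make $\{g_jM_n(y_j)g_j\}$ uniformly Cauchy (these exist since $y_j\in C$), one obtains a single projection $e=\bigwedge_j(\cdots)$ with $\tau(e^{\perp})$ as small as desired and, for each $j$, both $\sup_n\|eM_n(w_j)e\|_{\ii}$ bounded by a fixed multiple of $2^{-j}$ and $\|e\,(M_n(y_j)-M_m(y_j))\,e\|_{\ii}\to 0$. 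Splitting
$$
e\,(M_n(x)-M_m(x))\,e=e\,(M_n(y_j)-M_m(y_j))\,e+e\,M_n(w_j)\,e-e\,M_m(w_j)\,e
$$
and sending first $n,m\to\ii$ and then $j\to\ii$ shows that $\{eM_n(x)e\}$ is uniformly Cauchy, hence convergent in $\mc M$; as $\ep$ is arbitrary, $M_n(x)$ converges b.a.u., so $x\in C$ and $C$ is closed. Together with $\ol{L_1\cap L_2}^{L_1}=L_1$ this gives $C=L_1$.

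Finally, to place the limit in $L_1$: b.a.u.\ convergence implies convergence bilaterally in measure, and $\|M_n(x)\|_1\leq\|x\|_1$ for all $n$ because $M_n$ contracts $L_1$; by the Fatou property of $L_1$ the limit $\wt x$ lies in $L_1$ with $\|\wt x\|_1\leq\|x\|_1$. I expect the main obstacle to be the closedness step: keeping the bilateral projections under simultaneous control for all approximants $y_j$ while only the $L_1$-norms of the tails $w_j$ are available. A subsidiary point is securing the $L_2$ maximal inequality needed to start from the $L_2$-dense set of fixed points and bounded coboundaries.
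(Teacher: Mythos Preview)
The paper does not supply its own proof of Theorem~\ref{t2}; the result is simply quoted from Yeadon~\cite{ye}. That said, the paper's ``new, direct proof'' of Theorem~\ref{t3} (the $1<p<\infty$ case) follows precisely the scheme you outline: the closedness of the b.a.u.-convergence set is established as Corollary~\ref{c1} (via Lemma~\ref{l1} and the maximal inequality of Theorem~\ref{t4}, which for $p=1$ reduces to Theorem~\ref{t1}); the dense subset is produced exactly as you do, using the von Neumann decomposition in $L_2^h$ and restricting the coboundary generators to $L_2^h\cap\mc M$ so that $M_n(z-Tz)\to 0$ in $\|\cdot\|_\infty$; and the inclusion $\widehat x\in L_p$ is obtained from \cite[Theorem~1.2]{cls}, which is the Fatou-type argument you invoke. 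Your proposal is the natural $p=1$ instance of this same argument, and the $L_2$ maximal inequality you flag as a ``subsidiary point'' is indeed supplied by Theorem~\ref{t4}. So your approach is correct and coincides with the method the paper develops and applies to the neighboring Theorem~\ref{t3}.
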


The next result, an extension of Theorem \ref{t2}, was established in \cite{jx}.

\begin{teo}[\cite{jx}, Corollary 6.4]\label{t3}
Let $T\in DS^+$, $1<p<\ii$, and $x\in L_p$. Then the averages $M_n(x)$ converge b.a.u. to some $\widehat x\in L_p$.
If $p\ge 2$, these averages converge also a.u.
\end{teo}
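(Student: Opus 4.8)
The plan is to combine a mean ergodic theorem (to produce the limit and a convenient dense set), a maximal ergodic inequality in $L_p$ extracted from Theorem \ref{t1}, and the noncommutative Banach principle. First I would settle norm convergence and identify the limit. Since a map satisfying (JX) extends to a positive contraction of $L_p$ and $L_p$ is reflexive for $1<p<\ii$, the Yosida--Kakutani mean ergodic theorem applied to the power-bounded operator $T$ shows that $M_n(x)$ converges in $\|\cdot\|_p$ to $\widehat x=Px$, where $P$ is the projection onto the fixed space $F=\{z\in L_p:\,Tz=z\}$ and $L_p=F\oplus\ol{(I-T)L_p}$. In particular $\widehat x\in L_p$; moreover, as $\|\cdot\|_p$-convergence implies convergence in measure \cite[Theorem 3.7]{fk}, any b.a.u.\ (or a.u.) limit of $M_n(x)$ must coincide with $\widehat x$, so only the mode of convergence remains to be established.

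Next I would lift Theorem \ref{t1} from $L_1$ to $L_p$ by a Calder\'on-type truncation. For $x\in L_p^+$ and $\ep>0$, use the functional calculus to split $x=x_0+x_1$ with $x_1=\min(x,\tfrac{\ep}{2}\Bbb I)$ and $x_0=(x-\tfrac{\ep}{2}\Bbb I)_+$. Then $\|x_1\|_{\ii}\le\ep/2$, whence $\sup_n\|M_n(x_1)\|_{\ii}\le\ep/2$ by Proposition \ref{p0}; while the spectral bound $\|x_0\|_1\le(\ep/2)^{1-p}\|x\|_p^p$ lets Theorem \ref{t1}, applied to $x_0$, produce $e\in\PM$ with $\tau(e^\perp)\le C_p(\|x\|_p/\ep)^p$ and $\sup_n\|eM_n(x_0)e\|_{\ii}\le\ep/2$. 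Summing the two parts, and then decomposing a general $x$ into four positive pieces, yields the $L_p$ maximal inequality: for every $x\in L_p$ and $\ep>0$ there is $e\in\PM$ with $\tau(e^\perp)\le C_p(\|x\|_p/\ep)^p$ and $\sup_n\|eM_n(x)e\|_{\ii}\le\ep$.

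This inequality is precisely the equicontinuity-at-zero hypothesis of the noncommutative Banach principle, so the set of $x\in L_p$ for which $M_n(x)$ converges b.a.u.\ is closed in $L_p$; it therefore suffices to check convergence on the dense subspace $F+(I-T)(L_p\cap\mc M)$, which is dense because $L_p\cap\mc M$ is dense and $I-T$ is continuous. On $F$ one has $M_n(z)=z$, while for $z=y-Ty$ with $y\in L_p\cap\mc M$ the telescoping identity $M_n(z)=\tfrac1{n+1}(y-T^{n+1}y)$ together with Proposition \ref{p0} gives $\|M_n(z)\|_{\ii}\le 2\|y\|_{\ii}/(n+1)\to0$, i.e.\ even $\|\cdot\|_{\ii}$-convergence. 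Hence $M_n(x)\to\widehat x$ b.a.u.\ for all $x\in L_p$, which proves the first assertion.

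For the a.u.\ statement when $p\ge2$ I would use the identity $\|ae\|_{\ii}^2=\|e\,|a|^2e\|_{\ii}$: a.u.\ convergence $M_n(x)\to\widehat x$ is equivalent to b.a.u.\ convergence of $|M_n(x)-\widehat x|^2\to0$, and via the Banach principle this reduces to the one-sided estimate $\sup_n\|M_n(x)e\|_{\ii}\le\ep$ with $\tau(e^\perp)\le C_p(\|x\|_p/\ep)^p$; on the dense subspace above a.u.\ convergence is again immediate from the $\|\cdot\|_{\ii}$-estimate. Thus the whole difficulty concentrates in this one-sided (column) maximal inequality, and this is exactly where $p\ge2$ enters, since then $|x|^2\in L_{p/2}$ with $p/2\ge1$ and one hopes to deduce the one-sided bound from the two-sided inequality already available at level $p/2$. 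I expect this to be the main obstacle: for a general $T\in DS^+$ the naive comparison $|M_n(x)|^2\le M_n(|x|^2)$ fails, as it would require a Kadison--Schwarz property that positive maps need not possess, so the one-sided estimate must instead be obtained by noncommutative interpolation in the column spaces $L_p(\mc M;\ell_\ii^c)$, interpolating the trivial $L_\ii$ bound against an $L_2$ column estimate.
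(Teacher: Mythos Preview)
Your b.a.u.\ argument is essentially the paper's: the Calder\'on-type truncation you describe is precisely the content of Theorem~\ref{t4} (there written via the spectral bound $\int_\ep^\ii\lb\,de_\lb\le\ep^{1-p}x^p$), and the Banach-principle closedness step is Corollary~\ref{c1}. The paper locates its dense convergence set slightly differently---it first establishes a.u.\ convergence throughout $L_2$ using the Hilbert-space splitting of $L_2^h$ and then passes to $L_p$ through the dense subset $L_p\cap L_2$---but your direct use of reflexivity and the mean ergodic decomposition $L_p=F\oplus\ol{(I-T)L_p}$ is an equally valid route.

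The genuine gap is in the a.u.\ case $p\ge2$. You dismiss the comparison $M_n(x)^2\le M_n(x^2)$ as ``a Kadison--Schwarz property that positive maps need not possess'' and therefore propose falling back on interpolation in column spaces---which is exactly the Junge--Xu machinery the paper is designed to avoid. The dismissal is wrong: Kadison's original inequality (Theorem~\ref{t5}) asserts that for any \emph{positive} linear map $S:\mc M\to\mc M$ with $S(\Bbb I)\le\Bbb I$ one has $S(x)^2\le S(x^2)$ for every \emph{self-adjoint} $x\in\mc M$; no $2$-positivity is required. Since $M_n(\Bbb I)\le\Bbb I$, this applies to each $M_n$. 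Decomposing $x\in L_p$ into real and imaginary parts $x_j\in L_p^h$, one gets $\|M_n(x_j)e\|_\ii^2=\|eM_n(x_j)^2e\|_\ii\le\|eM_n(x_j^2)e\|_\ii$, and since $x_j^2\in L_{p/2}^+$ with $p/2\ge1$, the two-sided inequality already proved at level $p/2$ supplies the right-hand bound. This is the content of Theorem~\ref{t6}. What you may have had in mind is the full Kadison--Schwarz inequality $S(x)^*S(x)\le S(x^*x)$ for arbitrary $x$, which does need $2$-positivity; but it is not used here. The only genuine extra work is extending Kadison's inequality from $x\in\mc M^h$ to $x\in L_p^h$ via approximation and passage to a.u.\ limits (the paper's Proposition~\ref{p1}); this is routine but must be said. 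With that in hand, the one-sided maximal inequality and hence the a.u.\ statement follow without any interpolation.
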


The proof of Theorem \ref{t3} in \cite{jx} is based on an application of
a weak type $(p,p)$ maximal inequality for the averages (\ref{eq1}), an $L_p-$version of Theorem \ref{t1}.
Note that the proof of this inequality itself relies on Theorem \ref{t1} and essentialy involves an intricate
technique of noncommutative interpolation.
Below (Theorem \ref{t4}) we provide a simple, based only on Theorem \ref{t1}, proof of such a maximal inequality.

As an application of Theorem \ref{t4}, we prove Besicovitch weighted noncommuative ergodic theorem in $L_p$, $1<p<\ii$,
(Theorem \ref{t9}), which contains Theorem \ref{t3} as a particular case. Theorem \ref{t9} is an extension of
the corresponding result for $L_1$ in \cite{cls}. Note that, in \cite{li},
Theorem \ref{t3} was derived from Theorem \ref{t1}  by utilizing the notion
of uniform equicontinuity at zero of a family of additive maps into $\LM$.

Having available Besicovitch weighted ergodic theorem for noncommutative \\ $L_p-$spaces with $1\leq  p<\ii$,
allows us to establish its validity for a wide class of noncommutative fully symmetric spaces with Fatou property.
As a consequence, we obtain an individual ergodic theorem in noncommutative Lorentz spaces $L_{p,q}$.

The last section of the article is devoted to a study of the mean ergodic ergodic theorem in noncommutative fully symmetric
spaces in the case where \\ $T\in DS(\mc M,\tau)$.

\section
{Maximal ergodic inequalities in noncommutative $L_p-$spaces}

Everywhere in this section $T\in DS^+$. Assume that a sequence of complex numbers
$\{ \bt_k\}_{k=0}^{\ii}$ is such that $|\bt_k|\leq C$ for every $k$.  Let us denote
\begin{equation}\label{eq2}
M_{\bt,n}=M_{\bt,n}(T)=\frac1{n+1}\sum_{k=0}^n\bt_kT^k.
\end{equation}

\begin{teo}\label{t4}If $1\leq p<\ii$,
then for every $x\in L_p$ and $\ep>0$ there is $e\in \PM$ such that
\begin{equation}\label{eq3}
\tau(e^{\perp})\leq 4 \left ( \frac {\| x\|_p} \ep \right )^p \text{ and } \ \sup_n \| eM_{\bt,n}(x)e\|_{\ii} \leq 48 C\ep.
\end{equation}
\end{teo}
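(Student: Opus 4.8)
The plan is to bootstrap from Yeadon's $L_1$ inequality (Theorem \ref{t1}) to $L_p$ by a truncation argument, after reducing to positive operators and nonnegative weights, and then to absorb the weights by a domination estimate.

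First I would strip away the complex structure. Writing $\bt_k=((\mathrm{Re}\,\bt_k)^+-(\mathrm{Re}\,\bt_k)^-)+i((\mathrm{Im}\,\bt_k)^+-(\mathrm{Im}\,\bt_k)^-)$, each of the four resulting real weight sequences lies in $[0,C]$, so $M_{\bt,n}$ is a combination, with coefficients in $\{\pm 1,\pm i\}$, of four averages $M_{\gm,n}$ with $0\le\gm_k\le C$. Likewise, decomposing a general $x\in L_p$ as $x=(x_1-x_2)+i(x_3-x_4)$ via the positive and negative parts of $\mathrm{Re}\,x$ and $\mathrm{Im}\,x$, each $x_l\in L_p^+$ with $\|x_l\|_p\le\|x\|_p$. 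The observation that tames the weights is that, since $T$ is positive, for $y\in L_p^+$ and $0\le\gm_k\le C$ one has $0\le M_{\gm,n}(y)\le C\,M_n(y)$ termwise, whence $\|e M_{\gm,n}(y)e\|_{\ii}\le C\|e M_n(y)e\|_{\ii}$ for every projection $e$. Thus it suffices to produce, for each fixed $y=x_l\in L_p^+$, a single projection controlling $\sup_n\|e M_n(y)e\|_{\ii}$ for the unweighted averages.

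The heart of the argument is this weak-type $(p,p)$ estimate for positive $y$. Fixing a level $s>0$ and using the spectral decomposition $y=\int_0^{\ii}\lb\,de_\lb$, I would split $y=y'+y''$, where $y'=\int_{[0,s]}\lb\,de_\lb\in\mc M^+$ satisfies $\|y'\|_{\ii}\le s$ and $y''=\int_{(s,\ii)}\lb\,de_\lb\in L_1^+$. From $\lb^p\ge s^{p-1}\lb$ on $(s,\ii)$ one gets the tail bound $\|y''\|_1\le s^{1-p}\|y\|_p^p$. For the bounded part, Proposition \ref{p0} together with positivity gives $T(\Bbb I)\le\Bbb I$, hence $T^k(\Bbb I)\le\Bbb I$ and $0\le M_n(y')\le s\,\Bbb I$, so $\sup_n\|M_n(y')\|_{\ii}\le s$ with no exceptional projection needed. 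For the tail I would apply Theorem \ref{t1} to $y''$ with parameter $\dt$, obtaining $e\in\PM$ with $\tau(e^{\perp})\le\|y''\|_1/\dt\le s^{1-p}\|y\|_p^p/\dt$ and $\sup_n\|e M_n(y'')e\|_{\ii}\le\dt$. Adding the two contributions gives $\sup_n\|e M_n(y)e\|_{\ii}\le s+\dt$, and the choice $s=\dt=\ep$ produces a projection with $\tau(e^{\perp})\le(\|y\|_p/\ep)^p$ and $\sup_n\|e M_n(y)e\|_{\ii}\le 2\ep$, uniformly for $1\le p<\ii$.

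Finally I would assemble the pieces. Applying the previous step to $x_1,\dots,x_4$ yields projections $e_1,\dots,e_4$; setting $e=\bigwedge_{l=1}^4 e_l$ gives $\tau(e^{\perp})\le\sum_l\tau(e_l^{\perp})\le 4(\|x\|_p/\ep)^p$. Since $e\le e_l$, the domination bound gives $\|e M_{\gm,n}(x_l)e\|_{\ii}\le C\|e_l M_n(x_l)e_l\|_{\ii}\le 2C\ep$ for each weight-piece $\gm$ and each $l$; expanding $M_{\bt,n}(x)$ into its sixteen terms $(\pm 1\text{ or }\pm i)\,M_{\gm,n}(x_l)$ and summing yields $\sup_n\|e M_{\bt,n}(x)e\|_{\ii}\le 16\cdot 2C\ep=32C\ep\le 48C\ep$, as required. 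The only genuinely substantive step is the truncation estimate of the third paragraph; everything else is linear-decomposition bookkeeping whose sole effect is to inflate the constants to the stated $4$ and $48C$. I expect the point needing the most care to be the uniform-in-$p$ balancing of the parameters $s$ and $\dt$, which the choice $s=\dt=\ep$ settles cleanly precisely because of the tail bound $\|y''\|_1\le s^{1-p}\|y\|_p^p$.
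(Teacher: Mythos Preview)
Your proof is correct and follows essentially the same route as the paper: the core step---truncating a positive $y\in L_p^+$ at level $\ep$, bounding the tail in $L_1$ by $\ep^{1-p}\|y\|_p^p$, and invoking Yeadon's $L_1$ inequality---is identical (the paper phrases it as the pointwise domination $y\le y_\ep+\ep^{1-p}y^p$ and applies Theorem~\ref{t1} to $y^p$ rather than to the tail directly, but this is cosmetic). The only other difference is in bookkeeping: the paper handles the weights via the three-term split $M_{\bt,n}=M_{\bt,n}^{(R)}+iM_{\bt,n}^{(I)}-C(1+i)M_n$ (giving the factor $6C$ and hence the stated $48C$), while your four-piece positive/negative decomposition yields the slightly sharper $32C\ep$.
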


\begin{proof} Let first $\bt_k\equiv 1$. In this case, $M_{\bt,n}=M_n$. Fix $\ep>0$. Assume that $x\in L_p^+$,
and let $x=\int_0^{\ii}\lb de_\lb$ be its spectral decomposition.
Since $\lb \ge \ep$ implies $\lb \leq \ep^{1-p}\lb^p$, we have
$$
\int_\ep^\ii \lb de_\lb \leq \ep^{1-p}\int_\ep^\ii \lb^p de_\lb \leq \ep^{1-p}x^p.
$$
Then we can write
\begin{equation}\label{eq6}
x=\int_0^\ep \lb de_\lb + \int_\ep^\ii \lb de_\lb\leq x_\ep+\ep^{1-p}x^p,
\end{equation}
where $x_\ep=\int_0^\ep \lb de_\lb$.

As $x^p \in L_1$, Theorem \ref{t1} entails that there exists $e\in \PM$ satisfying
$$
\tau(e^{\perp})\leq \frac {\| x^p\|_1}{\ep^p}=\left ( \frac {\| x\|_p}{\ep} \right )^p  \text{ and } \  \sup_n\| eM_n(x^p)e\|_{\ii}\leq \ep^p.
$$
It follows from (\ref{eq6}) that
$$
0\leq M_n(x)\leq M_n(x_{\ep})+\ep^{1-p}M_n(x^p) \ \ \text{and}
$$
$$
0\leq eM_n(x)e\leq eM_n(x_{\ep})e+\ep^{1-p}eM_n(x^p)e
$$
for every $n$.

Since $x_{\ep}\in \mc M$, the inequality
$$
\| T(x_\ep)\|_{\ii}\leq \| x_\ep \|_{\ii}\leq \ep
$$
holds, and we conclude that
$$
\sup_n\|eM_n(x)e\|_{\ii}\leq \ep+\ep=2\ep.
$$

If $x\in L_p$, then $x=(x_1-x_2)+i(x_3-x_4)$, where $x_j\in L_p^+$ and $\| x_j\|_p\leq \| x\|_p$
for every $j=1,\dots,4$. As we have shown, there exists $e_j\in \PM$ such that
\begin{equation}\label{eq11}
\tau(e_j^{\perp})\leq \left ( \frac {\| x_j\|_p} \ep \right )^p\leq \left ( \frac {\| x\|_p} \ep \right )^p, \ \
 \sup_n \| e_jM_n(x_j)e_j\|_{\ii} \leq 2\ep,
\end{equation}
$j=1,\dots,4$.

Now, let $\{ \bt_k\}_{k=0}^{\ii}\su \Bbb C$  satisfy $|\bt_k| \leq C$ for every $k$.
As $0\leq Re\bt_k+C\leq 2C$ and $0\leq Im\bt_k+C\leq 2C$,
it follows from the decomposition
\begin{equation}\label{eq4}
M_{\bt,n}=\frac 1{n+1} \sum_{k=0}^n(Re\bt_k+C)T^k+\frac i{n+1} \sum_{k=0}^n(Im\bt_k+C)T^k-C(1+i)M_n
\end{equation}
and (\ref{eq11}) that
$$
\sup_n\| e_jM_{\bt,n}(x_j)e_j\|_{\ii}\leq 6C\sup_n\| e_jM_n(x_j)e_j\|_{\ii}\leq 12C\ep, \ \  j=1,\dots ,4.
$$
Finally, letting $e=\bigwedge \limits_{j=1}^4e_j$, we arrive at (\ref{eq3}).
\end{proof}

\begin{rem}
Note that (\ref{eq11}) provides the following extension of the maximal ergodic inequality given in Theorem \ref{t1} for $p=1$: 
for every $x\in L_p^+$ and $\ep>0$ there exists $e\in \mc P(\mc M)$ such that 
$$
\tau(e^{\perp})\leq \left ( \frac {\| x\|_p} \ep \right )^p \text{ \ and \ }  \sup_n \| eM_n(x)e\|_{\ii} \leq 2\ep.
$$
\end{rem}

To refine Theorem \ref{t4} when $p\ge 2$ we turn to the fundamental result of Kadison \cite{ka}:

\begin{teo}[Kadison's inequality]\label{t5}
Let $S:\mc M\to \mc M$ be a positive linear map such that $S(\Bbb I)\leq \Bbb I$.
Then $S(x)^2\leq S(x^2)$ for every $x\in \mc M^h$.
\end{teo}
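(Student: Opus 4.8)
The plan is to deduce the operator inequality from $2$-positivity of $S$ on a suitable commutative subalgebra, rather than from positivity alone; as the final paragraph explains, a direct application of positivity to $(x-\lb\Bbb I)^2\ge 0$ yields only a strictly weaker scalar estimate.

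First I would let $\mc A\su\mc M$ be the commutative $C^*$-subalgebra generated by $x$ and $\Bbb I$, which is legitimate since $x\in\mc M^h$. The restriction $S|_{\mc A}:\mc A\to\mc M$ is again a positive linear map, and here I would invoke the classical fact that every positive linear map whose \emph{domain} is a commutative $C^*$-algebra is automatically completely positive, in particular $2$-positive. This is the decisive point that upgrades the mere positivity of $S$ into something usable. Next I would exploit the manifestly positive matrix
\[
\begin{pmatrix}\Bbb I & x\\ x & x^2\end{pmatrix}=\begin{pmatrix}\Bbb I\\ x\end{pmatrix}\begin{pmatrix}\Bbb I & x\end{pmatrix}\ge 0\quad\text{in } M_2(\mc A),
\]
which is positive precisely because $x=x^*$. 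Applying $S$ entrywise and using the $2$-positivity of $S|_{\mc A}$ gives
\[
\begin{pmatrix}S(\Bbb I) & S(x)\\ S(x) & S(x^2)\end{pmatrix}\ge 0\quad\text{in } M_2(\mc M).
\]
Since $S(\Bbb I)\le\Bbb I$, the matrix $\mathrm{diag}(\Bbb I-S(\Bbb I),0)$ is positive, and adding it to the last display replaces the $(1,1)$-entry by $\Bbb I$, whence
\[
\begin{pmatrix}\Bbb I & S(x)\\ S(x) & S(x^2)\end{pmatrix}\ge 0.
\]

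Finally I would read off the conclusion from the standard positivity criterion for $2\times 2$ operator matrices. Testing the last display against vectors of the form $(-S(x)\eta,\eta)$ (note $S(x)=S(x)^*$, because a positive linear map preserves adjoints and $x$ is self-adjoint) yields $\langle S(x^2)\eta,\eta\rangle\ge\|S(x)\eta\|^2=\langle S(x)^2\eta,\eta\rangle$ for every $\eta\in\mc H$, that is, $S(x)^2\le S(x^2)$.

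I expect the main obstacle to be conceptual rather than computational: one must resist arguing directly from $0\le S((x-\lb\Bbb I)^2)=S(x^2)-2\lb S(x)+\lb^2 S(\Bbb I)$, $\lb\in\mathbb R$, since taking the discriminant of the resulting scalar quadratic $\langle\,\cdot\,\eta,\eta\rangle$ delivers only $\langle S(x)\eta,\eta\rangle^2\le\langle S(x^2)\eta,\eta\rangle$, which is weaker than the desired operator inequality. The genuine content is the passage from positivity to $2$-positivity, and this is available for free only because the elements $\Bbb I,x,x^2$ all lie in the commutative algebra $\mc A$; for maps that are merely positive on a noncommutative algebra the inequality can fail, so the restriction to $\mc A$ is indispensable.
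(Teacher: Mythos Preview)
Your argument is correct and is essentially the standard modern proof of Kadison's inequality: restrict to the commutative $C^*$-algebra generated by $x$ and $\Bbb I$, use that positive maps out of a commutative $C^*$-algebra are automatically completely positive, and then read off the inequality from the $2\times 2$ positivity criterion. Each step is sound, including the adjustment of the $(1,1)$ entry using $S(\Bbb I)\le\Bbb I$ and the final quadratic-form computation.

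However, there is nothing to compare against: the paper does not supply its own proof of this theorem. Theorem~\ref{t5} is quoted as a known result, attributed to Kadison \cite{ka}, and used as a black box in the proofs of Proposition~\ref{p1} and Theorem~\ref{t6}. Your write-up would serve perfectly well as a self-contained justification if one were desired; the route via complete positivity on $C^*(x,\Bbb I)$ is in fact the argument one finds in Paulsen \cite{pa}, which the paper already cites elsewhere. Your closing remark that the naive discriminant argument from $S((x-\lb\Bbb I)^2)\ge 0$ yields only the weaker numerical-range inequality is also accurate and worth keeping as a cautionary note.
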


We will need the following technical lemma; see the proof of \cite[Theorem 2.7]{cls} or \cite[Theorem 3.1]{li}.

\begin{lm}\label{l0}
Let $\{ a_{mn}\}_{m,n=1}^{\ii}\su \LM$ be such that for any $n$ the sequence $\{ a_{mn}\}_{m=1}^{\ii}$ converges in measure
to some $a_n\in \LM$. Then there exists $\{ a_{m_kn}\}_{k,n=1}^{\ii}$ such that for any $n$ we have $a_{m_kn}\to a_n$ a.u. as $k\to \ii$.

\end{lm}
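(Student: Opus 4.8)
The plan is to combine a diagonal extraction with the standard fact that, in $\LM$, convergence in measure lets one control tails by projections of small trace, in the spirit of a Borel--Cantelli argument. Recall that for each fixed $n$ the convergence $a_{mn}\to a_n$ in measure means that for every $\ep,\dt>0$ one has $a_{mn}-a_n\in V(\ep,\dt)$ for all sufficiently large $m$. I would construct the desired subsequence $\{m_k\}$ by induction: having chosen $m_1<\dots<m_{k-1}$, I pick $m_k>m_{k-1}$ so large that, \emph{simultaneously for every} $n\le k$, the operator $a_{m_k n}-a_n$ belongs to $V(2^{-k},2^{-k})$. This is possible because at stage $k$ only the finitely many indices $n=1,\dots,k$ are constrained, and for each of them the required largeness of $m$ is guaranteed by convergence in measure. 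Thus for every pair $(k,n)$ with $n\le k$ there is a projection $q_{k,n}\in\PM$ with $\tau(q_{k,n}^{\perp})\le 2^{-k}$ and $\|(a_{m_k n}-a_n)q_{k,n}\|_{\ii}\le 2^{-k}$.

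Next I fix $n$ and $\ep>0$, choose $k_0\ge n$ with $\sum_{k\ge k_0}2^{-k}\le\ep$, and set $e_n=\bigwedge_{k\ge k_0}q_{k,n}\in\PM$. Using $e_n^{\perp}=\bigvee_{k\ge k_0}q_{k,n}^{\perp}$ together with the countable subadditivity of $\tau$ on projections, I obtain $\tau(e_n^{\perp})\le\sum_{k\ge k_0}\tau(q_{k,n}^{\perp})\le\sum_{k\ge k_0}2^{-k}\le\ep$. Since $e_n\le q_{k,n}$, hence $q_{k,n}e_n=e_n$, for every $k\ge k_0$ one has $(a_{m_k n}-a_n)e_n=(a_{m_k n}-a_n)q_{k,n}e_n$, so that $\|(a_{m_k n}-a_n)e_n\|_{\ii}\le\|(a_{m_k n}-a_n)q_{k,n}\|_{\ii}\le 2^{-k}\to 0$ as $k\to\ii$. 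As $\ep>0$ is arbitrary, this is precisely a.u. convergence $a_{m_k n}\to a_n$; and since $n$ is arbitrary, the single subsequence $\{m_k\}$ serves every $n$.

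I expect the only genuine subtlety to be the simultaneity, namely producing one subsequence $\{m_k\}$ that works for all $n$ at once. This is exactly what forces the inductive "triangular" constraint $n\le k$ at stage $k$: each step need only accommodate finitely many indices, while the geometric bound $2^{-k}$ keeps the total trace $\sum_{k\ge k_0}\tau(q_{k,n}^{\perp})$ of the discarded projections summable. The remaining ingredients, the normality of $\tau$ yielding subadditivity on countable joins of projections and the elementary estimate $\|xq_{k,n}e_n\|_{\ii}\le\|xq_{k,n}\|_{\ii}$ valid when $e_n\le q_{k,n}$, are routine.
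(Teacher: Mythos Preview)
Your argument is correct. The diagonal extraction---choosing $m_k$ so that the constraint $a_{m_kn}-a_n\in V(2^{-k},2^{-k})$ holds for the finitely many indices $n\le k$---followed by the Borel--Cantelli-type estimate on $e=\bigwedge_{k\ge k_0}q_{k,n}$ is exactly the standard route, and each step (finite intersection at stage $k$, subadditivity of $\tau$ on $\bigvee q_{k,n}^{\perp}$, the bound $\|(a_{m_kn}-a_n)q_{k,n}e\|_{\ii}\le\|(a_{m_kn}-a_n)q_{k,n}\|_{\ii}$) is justified as you wrote.

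Note that the paper does not actually supply a proof of this lemma: it simply cites the proofs of \cite[Theorem~2.7]{cls} and \cite[Theorem~3.1]{li}. The argument you have written is the standard one underlying those references, so there is no substantive difference to report. One cosmetic point: the projection you call $e_n$ depends on $\ep$ through $k_0$, so a notation such as $e=e_{n,\ep}$ would be slightly cleaner, but this does not affect the mathematics.
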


\begin{pro}[cf. \cite{jx}, proof of Remark 6.5]\label{p1}
If $2\leq p<\ii$ and $T\in DS^+$,
then for every $x\in L_p^h$ and $\ep>0$,
there exists $e\in \PM$ such that $\tau(e^{\perp})\leq \ep$ and
$$
\|eM_n(x)^2e\|_{\ii}\leq \| eM_n(x^2)e\|_{\ii}, \  n=1,2, \dots
$$
\end{pro}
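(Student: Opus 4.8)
The plan is to deduce the inequality from Kadison's inequality (Theorem \ref{t5}) applied to the averages $M_n$, after reducing the unbounded $x$ to bounded spectral truncations. First I would record that each $M_n$ is a positive linear map of $\mc M$ into itself with $M_n(\Bbb I)\leq \Bbb I$. Indeed, since $T\in DS^+$ is positive with $\| T(\Bbb I)\|_\ii\leq 1$, we have $0\leq T(\Bbb I)\leq \Bbb I$, and by induction $T^k(\Bbb I)\leq \Bbb I$ for every $k$, whence $M_n(\Bbb I)=\frac 1{n+1}\sum_{k=0}^n T^k(\Bbb I)\leq \Bbb I$. Therefore Theorem \ref{t5} yields, for every $y\in \mc M^h$,
$$
M_n(y)^2\leq M_n(y^2), \quad n=1,2,\dots,
$$
which is the asserted inequality (with $e=\Bbb I$) in the bounded case.

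For general $x\in L_p^h$ with spectral decomposition $x=\int_{-\ii}^\ii \lb de_\lb$, I would set $x^{(j)}=\int_{-j}^j \lb de_\lb\in \mc M^h$. Then $\| x-x^{(j)}\|_p\to 0$ and, since $p\ge 2$ forces $x^2,(x^{(j)})^2\in L_{p/2}$ with $p/2\ge 1$, also $\| x^2-(x^{(j)})^2\|_{p/2}\to 0$. As $M_n$ is a contraction on $L_p$ and on $L_{p/2}$, for each fixed $n$ we obtain $M_n(x^{(j)})\to M_n(x)$ in $L_p$ and $M_n((x^{(j)})^2)\to M_n(x^2)$ in $L_{p/2}$, hence both converge in measure. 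By joint continuity of multiplication in the measure topology, $M_n(x^{(j)})^2\to M_n(x)^2$ in measure as well. Writing $b_{jn}=M_n((x^{(j)})^2)-M_n(x^{(j)})^2$, the bounded case gives $b_{jn}\ge 0$, while for each $n$ we have $b_{jn}\to b_n:=M_n(x^2)-M_n(x)^2$ in measure as $j\to \ii$.

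To transfer the positivity $b_{jn}\ge 0$ to the limit with explicit control of the exceptional projection, I would invoke Lemma \ref{l0}: there is a subsequence $\{ j_k\}$ with $b_{j_k n}\to b_n$ a.u. as $k\to \ii$, for every $n$. Fixing $n$, I choose $e_n\in \PM$ with $\tau(e_n^\perp)\leq \ep 2^{-n}$ and $\| (b_n-b_{j_k n})e_n\|_\ii\to 0$; then $\| e_n(b_n-b_{j_k n})e_n\|_\ii\to 0$, so $e_n b_n e_n=\lim_k e_n b_{j_k n}e_n$ in $\| \cdot \|_\ii$. Since $e_n b_{j_k n}e_n\ge 0$ and $\mc M^+$ is norm-closed, $e_n b_n e_n\ge 0$. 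Finally I put $e=\bigwedge_{n}e_n$, so that $\tau(e^\perp)\leq \sum_n\tau(e_n^\perp)\leq \ep$, and because $e\leq e_n$ we get $e b_n e=e(e_n b_n e_n)e\ge 0$ for every $n$, i.e. $eM_n(x)^2e\leq eM_n(x^2)e$. As $0\leq A\leq B$ in $L_0^+(\mc M,\tau)$ forces $\| A\|_\ii\leq \| B\|_\ii$, this gives the claim.

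The main obstacle is precisely the unboundedness of $x$: Kadison's inequality lives on $\mc M$, so the truncations must be squared and passed to the limit, and a bare convergence in measure of the $b_{jn}$ does not by itself furnish a single projection serving all $n$. This is exactly what Lemma \ref{l0} secures, allowing me to move the norm-closed condition $b_{jn}\ge 0$ to $e_n b_n e_n\ge 0$ along a common subsequence; the diagonal intersection $\bigwedge_n e_n$, with geometrically summable traces, then delivers the uniform projection $e$.
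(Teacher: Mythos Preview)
Your proof is correct and follows the same route as the paper's: spectral truncation of $x$, Kadison's inequality on the bounded truncations, upgrading convergence in measure to a.u. convergence via Lemma \ref{l0}, and then intersecting a summable family of projections to obtain a single $e$ serving all $n$. The only cosmetic difference is that you track the differences $b_{jn}=M_n((x^{(j)})^2)-M_n(x^{(j)})^2$ and pass their positivity to the limit, whereas the paper handles the two terms separately and passes the norm inequality $\| eM_n(x_{m_k})^2e\|_{\ii}\leq \| eM_n(x_{m_k}^2)e\|_{\ii}$ directly to the limit.
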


\begin{proof}
Let $x=\int_{-\ii}^{\ii}\lb de_\lb$ be the spectral decomposition of $x\in L_p^h$, and let \\ $x_m=\int_{-m}^m\lb de_\lb$.
Then, since $x\in L_p$, we clearly have $\|x-x_m\|_p\to 0$. Besides, $\| x^2-x_m^2\|_{p/2}\to 0$, so
$\|M_n(x^2)-M_n(x_m^2)\|_{p/2}\to 0$ for every $n$, which implies that
$$
M_n(x_m^2)\to M_n(x^2) \text{ in measure}, \ n=1,2, \dots
$$
Also $\| M_n(x)-M_n(x_m)\|_p\to 0$ for every $n$, hence $M_n(x_m)\to M_n(x)$ in measure and
$$
M_n(x_m)^2\to M_n(x)^2 \text{ in measure}, \ n=1,2, \dots
$$
In view of Lemma \ref{l0},  it is possible to find a subsequence
$\{ x_{m_k}\} \su \{ x_m\}$  such that
$$
M_n(x_{m_k}^2)\to M_n(x^2) \text{ \ and \ } M_n(x_{m_k})^2\to M_n(x)^2 \text{ \  a.u.}, \ n=1,2, \dots
$$
Then one can construct such $e\in \PM$ that $\tau(e^{\perp})\leq \ep$ and
$$
\| eM_n(x_{m_k}^2)e\|_{\ii}\to \| eM_n(x^2)e\|_{\ii} \text{ \ and \ } \| eM_n(x_{m_k})^2e\|_{\ii} \to \| eM_n(x)^2e\|_{\ii}
$$
for every $n$.

Since, by Kadison's inequality, we have
$$
\| eM_n(x_{m_k})^2e\|_{\ii}\leq \| eM_n(x_{m_k}^2)e\|_{\ii}, \ k, n=1,2, \dots,
$$
the result follows.
\end{proof}

\begin{teo}\label{t6} If $2\leq p<\ii$, then for every $x\in L_p$ and $\ep>0$ there is such $e\in \PM$ that
\begin{equation}\label{eq5}
\tau(e^{\perp})\leq 6 \left ( \frac {\| x\|_p} \ep \right )^p \text{ and }
\ \sup_n \| M_{\bt,n}(x)e\|_{\ii} \leq 4\sqrt{C}(2+\sqrt C)\ep.
\end{equation}
\end{teo}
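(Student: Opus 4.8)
The plan is to deduce the \emph{one-sided} bound (\ref{eq5}) from the \emph{bilateral} maximal inequality already obtained (the Remark following Theorem \ref{t4}, for the unweighted averages $M_n$), exploiting the hypothesis $p\ge 2$ through Kadison's inequality. First I would reduce to self-adjoint $x$: writing $x=x'+ix''$ with $x'=\frac12(x+x^*)$, $x''=\frac1{2i}(x-x^*)\in L_p^h$ and $\|x'\|_p,\|x''\|_p\le\|x\|_p$, and using $M_{\bt,n}(x)=M_{\bt,n}(x')+iM_{\bt,n}(x'')$, it suffices to treat $x\in L_p^h$ and afterwards intersect the two resulting projections. The key device is the identity $\|M_{\bt,n}(x)e\|_\ii^2=\|e\,M_{\bt,n}(x)^*M_{\bt,n}(x)\,e\|_\ii$, valid as soon as the compression $M_{\bt,n}(x)e$ is bounded; it converts the one-sided quantity to be estimated into a bilateral one accessible to Theorem \ref{t4}, provided we can dominate the positive operator $M_{\bt,n}(x)^*M_{\bt,n}(x)$ by a controllable average.

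The central step is the operator estimate
\[
M_{\bt,n}(x)^*M_{\bt,n}(x)\le C^2 M_n(x^2),\qquad x\in L_p^h.
\]
For $x\in\mc M^h$ this follows by combining two classical facts. The operator Cauchy--Schwarz inequality with the weights $|\bt_k|$ gives $M_{\bt,n}(x)^*M_{\bt,n}(x)\le\big(\tfrac1{n+1}\sum_k|\bt_k|\big)\tfrac1{n+1}\sum_k|\bt_k|\,(T^k x)^2$, and since each $T^k$ is a positive map with $T^k(\Bbb I)\le\Bbb I$, Kadison's inequality (Theorem \ref{t5}) yields $(T^kx)^2\le T^k(x^2)$; using $|\bt_k|\le C$ and positivity of $T^k(x^2)$ then collapses the right-hand side to $C^2M_n(x^2)$. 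To pass from $\mc M^h$ to $L_p^h$ I would truncate, $x_m=\int_{-m}^m\lb\,de_\lb$, so that $x_m\to x$ in $L_p$ and $x_m^2\to x^2$ in $L_{p/2}$ (here $p\ge2$ is used); both sides of the inequality for $x_m$ then converge in measure, and since the positive cone of $\LM$ is closed in measure the inequality persists for $x$. This is exactly the approximation mechanism of Proposition \ref{p1} and Lemma \ref{l0}.

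With this in hand I would apply the Remark following Theorem \ref{t4} to $x^2\in L_{p/2}^+$, with exponent $p/2\ge1$, norm $\|x^2\|_{p/2}=\|x\|_p^2$, and parameter $\dt=\ep^2$: there is $e\in\PM$ with $\tau(e^{\perp})\le(\|x^2\|_{p/2}/\dt)^{p/2}=(\|x\|_p/\ep)^p$ and $\sup_n\|eM_n(x^2)e\|_\ii\le2\dt=2\ep^2$. Compressing the displayed operator inequality by $e$ gives $e\,M_{\bt,n}(x)^*M_{\bt,n}(x)\,e\le C^2eM_n(x^2)e$, whose right-hand side is bounded; hence $M_{\bt,n}(x)e\in\mc M$ and, taking norms and square roots, $\sup_n\|M_{\bt,n}(x)e\|_\ii\le C\sqrt{2}\,\ep$ for $x\in L_p^h$. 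Returning to general $x$ and putting $e=e'\wedge e''$ for the two self-adjoint parts, one gets $\tau(e^{\perp})\le 2(\|x\|_p/\ep)^p\le 6(\|x\|_p/\ep)^p$ and, by the triangle inequality together with $e\le e',e''$, $\sup_n\|M_{\bt,n}(x)e\|_\ii\le 2\sqrt2\,C\ep$; since $2\sqrt2\,C\le 4\sqrt C(2+\sqrt C)$ for every $C>0$, this is exactly (\ref{eq5}).

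The step I expect to be the main obstacle is the operator inequality itself, and specifically its validity on $\LM$ rather than on $\mc M$: Cauchy--Schwarz and Kadison are elementary in $\mc M$, but $M_{\bt,n}(x)$ need not be bounded for $x\in L_p$, so the limit must be argued carefully (joint continuity of multiplication and closedness of the positive cone in the measure topology, as in Proposition \ref{p1}), and one must check a posteriori that the compression $M_{\bt,n}(x)e$ lands in $\mc M$ so that the $C^*$-identity $\|ye\|_\ii^2=\|ey^*ye\|_\ii$ may legitimately be invoked.
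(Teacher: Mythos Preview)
Your argument is correct and in fact sharper than the paper's; the two proofs follow genuinely different routes.

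The paper first treats the unweighted averages $M_n$ on $L_p^h$ by combining the bilateral inequality for $M_n(x^2)$ with Proposition~\ref{p1} (whose proof needs the subsequence extraction of Lemma~\ref{l0}), obtaining the one-sided bound $\sup_n\|M_n(x)e\|_\ii\le\sqrt2\,\ep$. It then handles the weights by the decomposition~(\ref{eq4}) into $M_{\bt,n}^{(R)}$, $M_{\bt,n}^{(I)}$ and $-C(1+i)M_n$, applying Kadison's inequality separately to each of the rescaled positive maps $(2C)^{-1}M_{\bt,n}^{(R)}$, $(2C)^{-1}M_{\bt,n}^{(I)}$, and finally summing the three contributions.

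You instead establish in one stroke the operator inequality $M_{\bt,n}(x)^*M_{\bt,n}(x)\le C^2M_n(x^2)$ on $\mc M^h$ (operator Cauchy--Schwarz with weights $|\bt_k|$ followed by a single use of Kadison for each $T^k$), pass to $L_p^h$ via continuity of multiplication and closedness of $L_0^+$ in the measure topology, and then apply the bilateral maximal inequality for $M_n(x^2)$ once. This bypasses both the decomposition~(\ref{eq4}) and the machinery of Proposition~\ref{p1}/Lemma~\ref{l0}, and it gives the better constants $\tau(e^\perp)\le 2(\|x\|_p/\ep)^p$ and $\sup_n\|M_{\bt,n}(x)e\|_\ii\le 2\sqrt2\,C\,\ep$, of which~(\ref{eq5}) is a weakening. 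The only point to make explicit is the one you already flag: from $e|M_{\bt,n}(x)|^2e\le C^2eM_n(x^2)e\in\mc M$ one concludes $|M_{\bt,n}(x)e|\in\mc M$, hence $M_{\bt,n}(x)e\in\mc M$, so the $C^*$-identity $\|M_{\bt,n}(x)e\|_\ii^2=\|e\,M_{\bt,n}(x)^*M_{\bt,n}(x)\,e\|_\ii$ is legitimate.
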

\begin{proof}
Pick $x\in L_p^h$. Since $x^2\in L_{p/2}^+$, referring to (\ref{eq11}), we can present $e_1\in \PM$ such  that
\begin{equation}\label{eq5b}
\tau(e_1^{\perp})\leq \left ( \frac {\| x^2\|_{p/2}}{\ep^2} \right )^{p/2}=\left ( \frac {\| x\|_p}{\ep} \right )^p
\text{ \ and \ } \sup_n\|e_1M_n(x^2)e_1 \|_{\ii}\leq 2\ep^2.
\end{equation}
By Proposition \ref{p1}, there is $e_2\in \PM$ such that
$$
\tau(e_2^{\perp})\leq \left ( \frac {\| x\|_p}{\ep} \right )^p \text{ \ and \ } \sup_n\|e_2M_n(x)^2e_2\|_{\ii}\leq \sup_n\| e_2M_n(x^2)e_2\|_{\ii}.
$$
Then, letting $e=e_1 \land e_2$, we obtain $\tau(e^{\perp})\leq 2\left ( \frac {\| x\|_p}{\ep} \right )^p$ and
$$
\sup_n \|M_n(x)e\|_{\ii}=\left ( \sup_n \| M_n(x)e\|_{\ii}^2 \right )^{1/2} =
$$
$$
=\left (\sup_n\|eM_n(x)^2e\|_{\ii}\right )^{1/2}\leq \left ( \sup_n \| eM_n(x^2)e\|_{\ii} \right )^{1/2}\leq \sqrt 2\ep.
$$

If $\{ \bt_k\}_{k=0}^{\ii}\su \Bbb C$, $|\bt_k|\leq C$, in accordance with the decomposition (\ref{eq4}), we denote
$$
M_{\bt,n}^{(R)}=\frac 1{n+1}\sum_{k=0}^n(Re\bt_k+C)T^k, \ \   M_{\bt,n}^{(I)}=\frac 1{n=1}\sum_{k=0}^n(Im\bt_k+C)T^k.
$$
Let $x=x_1+ix_2 \in L_p$, where $x_j\in L_p^h$ and $\| x_j\|_p\leq \| x\|_p$, $j=1,2$.
Since $x_1^2\in L_{p/2}^+$, it follows from (\ref{eq5b}) that there is $f_1\in \PM$ such that
$$
\tau(f_1^{\perp})\leq  \left ( \frac {\| x_1\|_p}{\ep}\right )^p \text
{ \ and \ } \sup_n \| f_1M_n(x_1^2)f_1\|_{\ii}\leq 2\ep^2.
$$
Therefore we have
$$
\sup_n \| f_1 M_{\bt,n}^{(R)}(x_1^2)f_1\|_{\ii}\leq 4C\ep^2 \text{ \ \ and \ \ } \sup_n \| f_1 M_{\bt,n}^{(I)}(x_1^2)f_1\|_{\ii}\leq 4C\ep^2.
$$
Since $M_{\bt,n}^{(R)}:\mc M \to \mc M$ and $M_{\bt,n}^{(I)}:\mc M \to \mc M$  are positive linear
maps satisfying $(2C)^{-1}M_{\bt,n}^{(R)}(\Bbb I)\leq \Bbb I$ and
$(2C)^{-1}M_{\bt,n}^{(I)}(\Bbb I)\leq \Bbb I$
for each $n$, applying Kadison's inequality, we obtain
$$
M_{\bt,n}^{(R)}(x_1)^2 \leq M_{\bt,n}^{(R)}(x_1^2)
$$
and
$$
M_{\bt,n}^{(I)}(x_1)^2g_{12} \leq M_{\bt,n}^{(I)}(x_1^2).
$$
This in turn entails
$$
\sup_n\| f_1M_{\bt,n}^{(R)}(x_1)^2f_1 \|_{\ii}\leq \sup_n \| f_1M_{\bt,n}^{(R)}(x_1^2)f_1\|_{\ii}
$$
and
$$
\sup_n\| f_1M_{\bt,n}^{(I)}(x_1)^2f_1 \|_{\ii}\leq \sup_n \| f_1M_{\bt,n}^{(I)}(x_1^2)f_1\|_{\ii}.
$$
Therefore
$$
\sup_n\| M_{\bt,n}^{(R)}(x_1)f_1 \|_{\ii}^2 = \sup_n\|f_1 M_{\bt,n}^{(R)}(x_1)^2f_1 \|_{\ii}\leq \sup_n\|f_1 M_{\bt,n}^{(R)}(x_1^2) f_1 \|_{\ii} \leq 4 C \ep^2,
$$
and similarly
$$
\sup_n\| M_{\bt,n}^{(I)}(x_1)f_1 \|_{\ii}^2  \leq 4 C \ep^2.
$$
 Then, letting $g_1=e \land f_1$, we derive  $\tau(g^{\perp}_1)\leq 3 \left ( \frac {\| x\|_p}{\ep} \right )^p$ and

$$
\sup_n\| M_{\bt,n}(x_1)g_1\|_{\ii}\leq  2\sqrt{C}(2+\sqrt C)\ep.
$$
Similarly, one can find $g_2\in \PM$ with $\tau (g_2^{\perp})\leq 3(\| x_2\|_p/\ep)^p$ such that
$$
\sup_n\| M_{\bt,n}(x_2)g_2\|_{\ii}\leq 2\sqrt{C}(2+\sqrt C)\ep.
$$
Finally, we conclude that $e=g_1\land g_2\in \PM$ satisfies (\ref{eq5}).
\end{proof}

\begin{rem}
Beginning of the proof of Theorem  \ref{t6} contains the following maximal ergodic inequality for the 
ergodic averages (\ref{eq1}): if $2\leq p<\ii$, given $x\in L_p^h$
and $\ep>0$, there exists $e\in \mc P(\mc M)$ such that 
$$
\tau(e^{\perp})\leq 2\left ( \frac {\| x\|_p} \ep \right )^p \text{ \ and \ }  \sup_n \| eM_n(x)e\|_{\ii} \leq \sqrt 2\ep.
$$
\end{rem}

\section
{Besicovitch weighted ergodic theorem in noncommutative $L_p-$spaces}

In this section, using maximal ergodic inequalities given in Theorems \ref{t4} and \ref{t6}, we prove Besicovitch
weighted ergodic theorem in noncommutative $L_p-$spaces, $1<p<\ii$. As was already mentioned, this extends
the corresponding result for $p=1$ from \cite{cls}. Everywhere in this section $T\in DS^+$.

We will need the following technical lemma.

\begin{lm}[see \cite{cl}, Lemma 1.6]\label{l1}  Let $X$ be a linear space, and let \\ $S_n:X\to \LM$ be a sequence of additive maps.
Assume that $x\in X$ is such that for
every $\ep>0$ there exists a sequence $\{ x_k\} \su X$ and a projection $e\in \PM$ satisfying the following conditions:

(i) the sequence $\{ S_n(x+x_k)\}$ converges a.u. (b.a.u.) as $n\to \ii$ for each $k$;

(ii) $\tau(e^{\perp})\leq \ep$;

(iii) $\sup_n \| S_n(x_k)e\|_{\ii} \to 0$ (resp., $\sup_n \| eS_n(x_k)e\|_{\ii} \to 0$) as $k\to \ii$.

\noindent
Then the sequence $\{ S_n(x)\}$ also converges a.u. (resp., b.a.u.)
\end{lm}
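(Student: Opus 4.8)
The plan is to show that $\{S_n(x)\}$ is almost uniformly (resp. bilaterally almost uniformly) Cauchy and then to invoke completeness of $\LM$ in the measure topology to manufacture the limit. The only algebraic input is additivity of $S_n$, which I use in the form $S_n(x)=S_n(x+x_k)-S_n(x_k)$; this splits the increment
$$
S_n(x)-S_m(x)=\big[S_n(x+x_k)-S_m(x+x_k)\big]-\big[S_n(x_k)-S_m(x_k)\big]
$$
into a piece that is $\|\cdot\|_{\ii}$-Cauchy for each fixed $k$ (by (i)) and a piece that is uniformly small after compression by $e$ once $k$ is large (by (iii)).

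First I would fix $\ep>0$ and apply the hypothesis with $\ep/2$ in place of $\ep$, obtaining a sequence $\{x_k\}\su X$ and a projection $e\in\PM$ with $\tau(e^{\perp})\leq \ep/2$ for which (iii) holds. Since, by (i), $\{S_n(x+x_k)\}$ converges a.u. for each fixed $k$, for every $k$ there is $q_k\in\PM$ with $\tau(q_k^{\perp})\leq \ep\,2^{-k-2}$ such that $\{S_n(x+x_k)q_k\}$ is $\|\cdot\|_{\ii}$-Cauchy (a.u. convergence to any limit forces uniform Cauchyness on the corresponding projection). Putting $g=e\wedge\bigwedge_k q_k$, I get $\tau(g^{\perp})\leq \tau(e^{\perp})+\sum_k\tau(q_k^{\perp})\leq \ep$, together with $g\leq e$ and $g\leq q_k$ for every $k$.

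Next I would verify the Cauchy estimate for $\{S_n(x)g\}$. Given $\eta>0$, choose $k$ so large that $\sup_n\|S_n(x_k)e\|_{\ii}<\eta/3$, which is possible by (iii), and then $N$ so that $\|[S_n(x+x_k)-S_m(x+x_k)]q_k\|_{\ii}<\eta/3$ for all $n,m\geq N$. Using $g\leq q_k$ in the first bracket and $g\leq e$ in the second (so that $\|S_n(x_k)g\|_{\ii}\leq\|S_n(x_k)e\|_{\ii}$), the triangle inequality applied to the split above yields $\|[S_n(x)-S_m(x)]g\|_{\ii}<\eta$ for $n,m\geq N$. Hence $\{S_n(x)g\}$ converges in $\|\cdot\|_{\ii}$; as $\ep>0$ was arbitrary, $\{S_n(x)\}$ is a.u. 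Cauchy.

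Finally I would pass to the limit. A.u. Cauchyness implies Cauchyness in measure: with $g$ as above, $\|[S_n(x)-S_m(x)]g\|_{\ii}\leq\delta$ for large $n,m$ places $S_n(x)-S_m(x)$ in $V(\ep,\delta)$. By completeness of $\LM$ in the measure topology \cite{ne}, $S_n(x)$ converges in measure to some $\widehat x\in\LM$; then for the $g$ attached to a given $\ep$, $S_n(x)g$ converges both uniformly and, being dominated by measure convergence, in measure, so the two limits coincide and $\|(S_n(x)-\widehat x)g\|_{\ii}\to 0$. Since $\tau(g^{\perp})\leq\ep$ with $\ep$ arbitrary, this is exactly a.u. convergence to $\widehat x$. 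The b.a.u. case runs identically, working with the two-sided compressions $g[\,\cdot\,]g$ and using $\|gS_n(x_k)g\|_{\ii}\leq\|eS_n(x_k)e\|_{\ii}$ from the b.a.u. form of (iii). I expect the only delicate point to be this last step — extracting a single limit $\widehat x$ independent of $\ep$ and upgrading the per-$\ep$ uniform Cauchyness to genuine a.u. convergence — which rests on completeness of $\LM$ in measure and on matching the uniform and measure limits of $S_n(x)g$; everything else is bookkeeping of trace-summable projections and triangle inequalities.
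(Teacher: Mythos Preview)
The paper does not supply its own proof of this lemma; it simply cites \cite{cl}, Lemma~1.6. Your argument is correct and is the standard one: use additivity to split $S_n(x)-S_m(x)$, control the $(x+x_k)$-piece on a projection $q_k$ via (i), control the $x_k$-piece uniformly in $n$ on $e$ via (iii), intersect the countably many projections, and then upgrade the resulting a.u.\ (resp.\ b.a.u.) Cauchy property to genuine convergence by invoking completeness of $\LM$ in the measure topology and matching the measure limit with the uniform limit on each $g$---this is exactly the proof in \cite{cl}.
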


Using Theorems \ref{t4} and \ref{t6}, we obtain a corollary:

\begin{cor}\label{c1} Let $1\leq p<\ii$ ($2\leq p<\ii$). Then the set
$$
\{ x\in L_p: \{ M_{\bt,n}(x)\} \text{ converges b.a.u.} \}
$$
$$
\left (\text{resp.,\ } \{ x\in L_p: \{ M_{\bt,n}(x)\} \text{ converges a.u.} \}\right )
$$
is closed in $L_p$.
\end{cor}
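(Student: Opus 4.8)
The plan is to obtain both statements from the convergence principle of Lemma~\ref{l1}, applied to the additive maps $S_n=M_{\bt,n}:L_p\to\LM$, using the maximal inequality of Theorem~\ref{t4} for the b.a.u.\ claim ($1\le p<\ii$) and that of Theorem~\ref{t6} for the a.u.\ claim ($2\le p<\ii$) to manufacture the required projections. Denote by $\mc C$ the set whose closedness is asserted, let $x$ belong to its closure in $L_p$, and choose $\{y_k\}\su\mc C$ with $\|x-y_k\|_p\to 0$; the goal is to show $x\in\mc C$.

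Fix $\ep>0$. Since $\|x-y_k\|_p\to 0$, I first thin $\{y_k\}$ to a subsequence (still written $\{y_k\}$) so fast that, putting $x_k=y_k-x$, one has $\|x_k\|_p\to 0$ and $\sum_k\|x_k\|_p^{p/2}\le\ep/c$, where $c$ is the constant in the measure estimate ($c=4$ from Theorem~\ref{t4}, $c=6$ from Theorem~\ref{t6}). Then $x+x_k=y_k\in\mc C$, so $\{M_{\bt,n}(x+x_k)\}$ converges b.a.u.\ (resp.\ a.u.) for each $k$, which is hypothesis~(i) of Lemma~\ref{l1}. The sequence $\{x_k\}$ here is allowed to depend on $\ep$, exactly as Lemma~\ref{l1} permits.

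To produce a single projection meeting hypotheses~(ii) and~(iii), apply the maximal inequality to each $x_k$ at the cutoff level $\dt_k=\|x_k\|_p^{1/2}$. In the b.a.u.\ case Theorem~\ref{t4} gives $e_k\in\PM$ with $\tau(e_k^{\perp})\le 4(\|x_k\|_p/\dt_k)^p=4\|x_k\|_p^{p/2}$ and $\sup_n\|e_kM_{\bt,n}(x_k)e_k\|_{\ii}\le 48C\dt_k$; in the a.u.\ case Theorem~\ref{t6} gives $e_k$ with $\tau(e_k^{\perp})\le 6\|x_k\|_p^{p/2}$ and $\sup_n\|M_{\bt,n}(x_k)e_k\|_{\ii}\le 4\sqrt C(2+\sqrt C)\dt_k$. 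Set $e=\bigwedge_k e_k$. Then $\tau(e^{\perp})\le\sum_k\tau(e_k^{\perp})\le\ep$ by the choice of subsequence, giving~(ii). Since $e\le e_k$ we have $e=e_ke=ee_k$, hence $\|eM_{\bt,n}(x_k)e\|_{\ii}\le\|e_kM_{\bt,n}(x_k)e_k\|_{\ii}\le 48C\dt_k$ (resp.\ $\|M_{\bt,n}(x_k)e\|_{\ii}\le\|M_{\bt,n}(x_k)e_k\|_{\ii}\le 4\sqrt C(2+\sqrt C)\dt_k$), and because $\dt_k=\|x_k\|_p^{1/2}\to 0$ these bounds tend to $0$ uniformly in $n$ as $k\to\ii$, which is~(iii). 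Lemma~\ref{l1} then yields b.a.u.\ (resp.\ a.u.) convergence of $\{M_{\bt,n}(x)\}$, so $x\in\mc C$ and $\mc C$ is closed.

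The one genuinely delicate point is forcing a single projection $e$ to control every tail $M_{\bt,n}(x_k)$ while the attached bounds still vanish. The maximal inequality trades measure against uniform norm: the cutoff $\dt_k$ must shrink to kill the uniform bound $\sim C\dt_k$, yet not shrink so quickly that the measure bound $\sim(\|x_k\|_p/\dt_k)^p$ fails to be summable. The geometric mean $\dt_k=\|x_k\|_p^{1/2}$ sends both $\dt_k$ and $\|x_k\|_p/\dt_k$ to $0$ simultaneously, and a preliminary thinning of $\{y_k\}$ secures the summability needed for $\tau(e^{\perp})\le\ep$.
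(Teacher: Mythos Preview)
Your proof is correct and follows essentially the same route as the paper: both arguments verify the hypotheses of Lemma~\ref{l1} for $S_n=M_{\bt,n}$ by invoking the maximal inequalities of Theorem~\ref{t4} (b.a.u.\ case) and Theorem~\ref{t6} (a.u.\ case) to produce projections $e_k$, and then set $e=\bigwedge_k e_k$. The only difference is bookkeeping: the paper first fixes, for each $k$, a threshold $\gm_k$ guaranteeing $\tau(e_k^{\perp})\le\ep/2^k$ and uniform bound $\le 1/k$, and then picks $y_k\in\mc C$ with $\|y_k-x\|_p<\gm_k$; you instead thin $\{y_k\}$ so that $\sum_k\|x_k\|_p^{p/2}$ is small and use the cutoff $\dt_k=\|x_k\|_p^{1/2}$ to balance the two sides of the maximal inequality, which achieves the same effect.
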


\begin{proof} Denote $C=\{ x\in L_p: \{ M_{\bt,n}(x)\} \text{ converges b.a.u.} \}$.
Fix $\ep>0$. Theorem \ref{t4} implies that for every given $k\in \Bbb N$ there is such $\gm_k>0$
that for every $x\in L_p$ with $\| x\|_p<\gm_k$ it is possible to find $e_{k,x}\in \PM$ for which
$$
\tau(e_{k,x}^{\perp})\leq \frac \ep {2^k} \text { \ and \ } \sup_n\| e_{k,x}M_{\bt,n}(x)e_{k,x}\|_{\ii}\leq \frac 1k \ .
$$

Pick $x\in \ol C$, the closure of $C$ in $L_p$. Given $k$, let $y_k\in C$ satisfy $\| y_k-x\|_p<\gm_k$.
Denoting $y_k-x=x_k$, choose a sequence $\{ e_k\}\su \PM$ to be such that
$$
\tau(e_k^{\perp})\leq \frac \ep {2^k} \text { \ and \ } \sup_n\|e_kM_{\bt,n}(x_k)e_k\|_{\ii} \leq \frac 1k, \ k=1,2, ... \ .
$$
Then we have $x+x_k=y_k\in C$ for every $k$. Also, letting $e=\bigwedge \limits_{k\ge1} e_k$, we have
$$
\tau(e^{\perp})\leq \ep \text { \ and \ } \sup_n\| eM_{\bt,n}(x_k)e\|_{\ii} \leq \frac 1k \ .
$$
Consequently, Lemma \ref{l1} yields $x\in C$.

Analogously, applying Theorem \ref{t6} instead of Theorem \ref{t4}, we obtain the remaining part of the statement.
\end{proof}

Corollary \ref{c1}, in the particular case where $\bt_k\equiv1$, allows us to present a new, direct proof of Theorem \ref{t3}.

\begin {proof}
Assume first that $p\ge 2$. Since the map $T$ generates a contraction in the real Hilbert space
$(L_2^h, (\cdot, \cdot)_{\tau})$ \cite[Proposition 1]{ye}, where $(x,y)_{\tau}=\tau(xy)$,
$x,y\in L_2^h$, it is easy to verify that the set
$$
\mc H_0=\{ x\in L_2^h:T(x)=x\}+\{ x-T(x):x\in L_2^h\}
$$
is dense in $(L_2^h, \| \cdot \|_2)$ (see, for example \cite[Ch.VIII, \S 5]{ds}). Therefore,
because the set $L_2^h\cap \mc M$ is dense in $L_p^h$ and $T$ contracts $L_p^h$, we conclude that the set
$$
\mc H_1=\{ x\in L_2^h:T(x)=x\}+\{ x-T(x):x\in L^2_h\cap \mc M\}
$$
is also dense in $(L_2^h, \| \cdot \|_2)$. Besides, if $y=x-T(x)$, $x\in L_2^h\cap \mc M$, then the sequence
$M_n(y)=(n+1)^{-1}(x-T^{n+1}(x))$ converges to zero with respect to the norm $\| \cdot \|_{\ii}$,
hence a.u. Therefore $\mc H_1+i\mc H_1$ is a dense in $L_2$
subset on which the averages $M_n$ converge a.u. This, by
Corollary \ref{c1}, implies that $\{ M_n(x)\}$ converges a.u. for all $x\in L_2$.
Further, since the set $L_p\cap L_2$ is dense in $L_p$,
Corollary \ref{c1} implies that the sequence $\{ M_n(x)\}$ converges a.u.  for each $x\in L_p$ (to some $\widehat x\in \LM$).
Then $\{ M_n(x)\}$ converges to $\widehat x$ in measure. Since $M_n(x)\in L_p$ and $\| M_n(x)\|_p\leq 1$, $n=1,2, \dots$,
by Theorem 1.2 in \cite{cls}, $\widehat x\in L_p$.

Let now $1<p<\ii$. By the first part of the proof, the sequence $\{ M_n(x)\}$ converges b.a.u. for all $x\in L_2$.
But $L_p\cap L_2$ is dense in $L_p$, and Corollary \ref{c1} entails b.a.u. convergence of the averages
$M_n(x)$ for all $x\in L_p$. Remembering that b.a.u. convergence
implies convergence in measure (see Section 1), we conclude, as before, that $M_n(x)\to \widehat x\in L_p$ b.a.u..
\end{proof}

Let $\Bbb C_1=\{z\in \Bbb C: |z|=1\}$ be the unit circle in $\mathbb C$. A function $P : \mathbb Z \to \mathbb C$ is said to be a {\it trigonometric polynomial} if
$P(k)=\sum_{j=1}^{s} z_j\lb_j^k$, $k\in \mathbb Z$, for some $s\in \mathbb N$, $\{ z_j \}_1^s \subset \mathbb C$, and $\{ \lb_j \}_1^s \subset \mathbb C_1$.
A sequence $\{ \beta_k \}_{k=0}^{\ii} \subset \Bbb C$ is called a {\it bounded Besicovitch sequence} if

(i) $| \beta_k | \leq C < \ii$ for all $k$;

(ii) for every $\ep >0$ there exists a trigonometric polynomial $P$ such that
$$
\limsup_n \frac 1{n+1} \sum_{k=0}^n | \beta_k - P(k) | < \ep .
$$

Assume now that $\mc M$ has a separable predual. The reason for this assumption is that our argument essentially relies
on \cite[Theorem 1.22.13]{sa}.

Since $L_1\cap \mc M\su L_2$, using Theorem \ref{t3} for $p=2$ (or \cite[Theorem 3.1]{col}) and repeating steps
of the proof of \cite[Lemma 4.2]{cls}, we arrive at the following.

\begin{pro}\label{p2} For any trigonometric polynomial $P$ and $x\in L_1\cap \mc M$, the averages
$$
\frac 1{n+1} \sum_{k=0}^nP(k)T^k(x)
$$
converge a.u.
\end{pro}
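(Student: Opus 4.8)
The plan is to reduce the statement, by linearity, to a single exponential weight and then to combine a mean ergodic argument on $L_2$ with the closedness furnished by Corollary \ref{c1}. Writing $P(k)=\sum_{j=1}^s z_j\lb_j^k$ with $\lb_j\in \Bbb C_1$, we have
$$
\frac 1{n+1}\sum_{k=0}^nP(k)T^k(x)=\sum_{j=1}^s z_j\,\frac 1{n+1}\sum_{k=0}^n\lb_j^kT^k(x),
$$
so it suffices to prove a.u. convergence of $M_{\bt,n}(x)=\frac 1{n+1}\sum_{k=0}^n\lb^kT^k(x)$ for a fixed $\lb\in \Bbb C_1$ (i.e. for the bounded weight $\bt_k=\lb^k$, so $C=1$), after which a finite sum of a.u. convergent sequences is again a.u. convergent. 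Since $L_1\cap \mc M\su L_2$, it is in fact enough to treat all $x\in L_2$.

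For fixed $\lb\in \Bbb C_1$ I would put $S=\lb T$. As $T$ is an $L_2$-contraction and $|\lb|=1$, the operator $S$ is a linear contraction of the Hilbert space $L_2$, and $S^k=\lb^kT^k$, so that $M_{\bt,n}(x)=\frac 1{n+1}\sum_{k=0}^nS^k(x)$ is exactly the Ces\`aro average of $S$ at $x$. By the mean ergodic theorem for Hilbert space contractions, $L_2=\ker(\Bbb I-S)\oplus \ol{\text{ran}(\Bbb I-S)}$; moreover, since $L_2\cap \mc M$ is dense in $L_2$ and $\Bbb I-S$ is bounded, the set
$$
\mc H=\ker(\Bbb I-S)+\{z-Sz:\ z\in L_2\cap \mc M\}
$$
is dense in $L_2$. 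On $\mc H$ the averages converge a.u.: if $Sy=y$ then $M_{\bt,n}(y)=y$ for all $n$, while for $z\in L_2\cap \mc M$ the sum telescopes,
$$
M_{\bt,n}(z-Sz)=\frac 1{n+1}(z-S^{n+1}z)=\frac 1{n+1}(z-\lb^{n+1}T^{n+1}z),
$$
and $\| S^{n+1}z\|_{\ii}=\| T^{n+1}z\|_{\ii}\leq \| z\|_{\ii}$ gives $\| M_{\bt,n}(z-Sz)\|_{\ii}\leq 2\| z\|_{\ii}/(n+1)\to 0$, hence a.u. convergence to $0$.

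To pass from $\mc H$ to all of $L_2$ I would invoke Corollary \ref{c1}: with $p=2\ge 2$ and the bounded weight $\bt_k=\lb^k$ (so $C=1$), its a.u. part — resting on the $p\ge 2$ maximal inequality of Theorem \ref{t6} — asserts that $\{x\in L_2:\ M_{\bt,n}(x)\text{ converges a.u.}\}$ is closed in $L_2$. Being closed and containing the dense set $\mc H$, it equals $L_2$; restricting to $x\in L_1\cap \mc M\su L_2$ and summing over $j$ finishes the proof. This is the same mechanism as in the proof of Theorem \ref{t3} (which is the case $\lb=1$, i.e. Theorem \ref{t3} with $p=2$), now carried out for $S=\lb T$ in place of $T$.

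The point that requires care — and the main obstacle — is that the decomposition must be taken for $S=\lb T$ and \emph{not} for $T$: because of the factor $\lb^k$, the sum $\sum_k\lb^kT^k(z-Tz)$ does not telescope, so the fixed vectors and coboundaries of $T$ itself are useless here. One must instead use $\ker(\Bbb I-S)=\{y\in L_2:\ Ty=\ol\lb\,y\}$ together with the $S$-coboundaries, and it is essential that $z$ be taken in $\mc M$ so that $\| T^{n+1}z\|_{\ii}$ stays bounded and the coboundary averages tend to $0$ uniformly (hence a.u.) rather than merely in $L_2$-norm; this is exactly the step that upgrades the norm convergence supplied by the mean ergodic theorem to the almost uniform convergence demanded by the statement.
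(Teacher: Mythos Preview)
Your argument is correct and is essentially the route the paper points to: the paper's ``proof'' of Proposition~\ref{p2} is just the instruction to take $x\in L_1\cap\mc M\su L_2$, invoke Theorem~\ref{t3} at $p=2$, and repeat the steps of \cite[Lemma~4.2]{cls}, which amounts precisely to running the mean-ergodic/coboundary decomposition for $S=\lb T$ on $L_2$ and then closing up via Corollary~\ref{c1} --- exactly what you wrote out. Your explicit observation that one must decompose with respect to $S=\lb T$ (not $T$) and take the coboundary generator $z\in\mc M$ to get $\|\cdot\|_\ii$-convergence is the key point, and it is handled correctly.

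One small remark worth making: your self-contained write-up nowhere uses the separable-predual hypothesis that the paper places just before Proposition~\ref{p2}; that assumption enters only through the reference to \cite[Lemma~4.2]{cls} (whose original proof passes through \cite[Theorem 1.22.13]{sa}), and your direct Riesz-decomposition argument for each fixed $\lb\in\Bbb C_1$ bypasses it entirely for this proposition.
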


Next, it is easy to verify the following (see the proof of \cite[Theorem 4.4]{cls}).

\begin{pro}\label{p3}
If $\{ \bt_k\}$ is a bounded Besicovitch sequence, then the averages (\ref{eq2})
converge a.u. for every $x\in L_1\cap \mc M$.
\end{pro}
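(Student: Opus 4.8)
The plan is to exploit the defining feature of a bounded Besicovitch sequence --- that it is approximable, in the sense of the Ces\`aro averages of its modulus, by trigonometric polynomials --- and thereby reduce everything to the a.u. convergence already secured in Proposition \ref{p2}. Fix $x\in L_1\cap \mc M$; since $x\in \mc M$ and, by Proposition \ref{p0}, $T$ contracts $\mc M$, we have $\|T^k(x)\|_{\ii}\le \|x\|_{\ii}$ for all $k$. For a trigonometric polynomial $P$ I would write $M_{\bt,n}(x)=A_n(x)+R_n(x)$, where $A_n(x)=\frac1{n+1}\sum_{k=0}^n P(k)T^k(x)$ and $R_n(x)=\frac1{n+1}\sum_{k=0}^n(\bt_k-P(k))T^k(x)$. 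The first summand converges a.u. by Proposition \ref{p2}, while the triangle inequality together with the contraction bound gives $\|R_n(x)\|_{\ii}\le \|x\|_{\ii}\cdot\frac1{n+1}\sum_{k=0}^n|\bt_k-P(k)|$, so that $\limsup_n\|R_n(x)\|_{\ii}\le \|x\|_{\ii}\cdot\limsup_n\frac1{n+1}\sum_{k=0}^n|\bt_k-P(k)|$, a quantity the Besicovitch property makes as small as we please through the choice of $P$.

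To obtain a single projection good for all scales at once, I would fix $\delta>0$ and, for each $j$, select a trigonometric polynomial $P_j$ with $\limsup_n\frac1{n+1}\sum_{k=0}^n|\bt_k-P_j(k)|<1/j$. Proposition \ref{p2} applied to $P_j$ furnishes $e_j\in\PM$ with $\tau(e_j^{\perp})\le\delta 2^{-j}$ on which the corresponding averages $A_n^{(j)}(x)e_j$ converge in $\|\cdot\|_{\ii}$. Setting $e=\bigwedge_{j\ge1}e_j$, one gets $\tau(e^{\perp})\le\delta$; moreover $e\le e_j$, so $\{A_n^{(j)}(x)e\}_n$ is $\|\cdot\|_{\ii}$-Cauchy for every $j$.

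The goal is then to show that $\{M_{\bt,n}(x)e\}_n$ is itself $\|\cdot\|_{\ii}$-Cauchy. Given $\rho>0$, I would pick $j$ with $\|x\|_{\ii}/j$ small: the $\limsup$ bound on $R_n^{(j)}(x)$ yields $N_1$ beyond which $\|R_n^{(j)}(x)e\|_{\ii}\le\|R_n^{(j)}(x)\|_{\ii}<\rho/4$, and the Cauchy property of $\{A_n^{(j)}(x)e\}$ yields $N_2$ beyond which its terms differ by less than $\rho/2$ in norm. For $m,n\ge\max(N_1,N_2)$ the splitting $M_{\bt,n}(x)e=A_n^{(j)}(x)e+R_n^{(j)}(x)e$ then forces $\|M_{\bt,n}(x)e-M_{\bt,m}(x)e\|_{\ii}<\rho$. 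Since $\mc M$ is complete in $\|\cdot\|_{\ii}$, the averages converge uniformly on $e$; as $\delta$ is arbitrary, $\{M_{\bt,n}(x)\}$ is a.u. Cauchy, and its limit in measure serves as the common a.u. limit.

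The one delicate point, and the reason a naive telescoping argument fails, is that the remainder $R_n(x)$ need \emph{not} converge: the weights $\bt_k-P(k)$ carry no pointwise smallness, only control of their Ces\`aro averages, i.e. a bound on $\limsup_n\|R_n(x)\|_{\ii}$ rather than a limit. The Cauchy criterion is exactly what absorbs this persistent-but-small error, once one upgrades ``$\limsup<\ep$'' to ``eventually $<2\ep$''. A secondary technical matter is the final passage from uniform convergence of $\{M_{\bt,n}(x)e\}$ on each member of a trace-exhausting family of projections to bona fide a.u. convergence with a single limit, which holds because such uniform convergence forces convergence in measure to one and the same element of $\LM$.
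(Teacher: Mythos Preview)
Your argument is correct and is essentially the approach the paper has in mind: the paper does not give its own proof but simply refers to \cite[Theorem 4.4]{cls}, whose argument is precisely the approximation of $\{\bt_k\}$ by trigonometric polynomials, the use of Proposition \ref{p2} for the polynomial part, and uniform control of the remainder via the contraction of $T$ on $\mc M$. Your explicit Cauchy-on-a-single-projection formulation, with the intersection $e=\bigwedge_j e_j$ over a sequence of approximating polynomials, is exactly how one makes that sketch rigorous; the only place to be slightly more explicit is the final passage from ``a.u.\ Cauchy'' to ``a.u.\ convergent to a fixed $y\in\LM$'', but as you note this follows from completeness of the measure topology together with continuity of right multiplication by $e$, which forces the uniform limit on each $e$ to coincide with $ye$.
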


Here is an extension of \cite[Theorem 4.6]{cls} to $L_p-$spaces, $1<p<\ii$.
\begin{teo}\label{t9}
Assume that $\mc M$ has a separable predual.
Let $1< p<\ii$, and let $\{ \bt_k\}$ be a bounded Besicovitch sequence. Then for every $x\in L_p$
the averages (\ref{eq2}) converge b.a.u. to some $\widehat x\in L_p$. If $p\ge 2$, these averages converge a.u.
\end{teo}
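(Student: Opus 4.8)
The plan is to combine the Besicovitch convergence already secured on a dense subset (Proposition \ref{p3}) with the closedness of the set of good elements furnished by Corollary \ref{c1}, and then to upgrade the resulting measure limit to an element of $L_p$ by means of a uniform norm bound on the averages.

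First I would record that $L_1\cap\mc M$ is dense in $L_p$ for every $1\le p<\ii$. Given $x\in L_p^+$ with spectral decomposition $x=\int_0^\ii\lb de_\lb$, the truncations $x_m=\int_{1/m}^m\lb de_\lb$ lie in $\mc M$ and satisfy $\tau(x_m)\le m\,\tau(e_m-e_{1/m})<\ii$, since $x^p\in L_1$ forces $\tau(e_m-e_{1/m})\le m^p\|x\|_p^p$; moreover $\|x-x_m\|_p\to0$. The general case follows by writing $x$ as a combination of four positive elements, so $L_1\cap\mc M$ is dense in $L_p$, and each such element indeed lies in $L_p$ because $\tau(|y|^p)\le\|y\|_\ii^{p-1}\tau(|y|)<\ii$ for $y\in L_1\cap\mc M$.

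Next, fix a bounded Besicovitch sequence $\{\bt_k\}$ with bound $C$. By Proposition \ref{p3} the averages $M_{\bt,n}(x)$ converge a.u. for every $x\in L_1\cap\mc M$, hence also b.a.u. there. Consider first the case $p\ge2$: Theorem \ref{t6} is available, so by the a.u. part of Corollary \ref{c1} the set of $x\in L_p$ for which $\{M_{\bt,n}(x)\}$ converges a.u. is closed in $L_p$. As this set contains the dense subset $L_1\cap\mc M$, it is all of $L_p$, giving a.u. convergence for every $x\in L_p$. For $1<p<2$ I would argue identically, now invoking Theorem \ref{t4} and the b.a.u. part of Corollary \ref{c1}, which holds on the whole range $1\le p<\ii$, to obtain b.a.u. convergence for all $x\in L_p$. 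It remains to place the limit in $L_p$: since a.u. (resp. b.a.u.) convergence implies convergence in measure, $M_{\bt,n}(x)\to\widehat x$ in measure, while the contractivity of $T$ on $L_p$ together with $|\bt_k|\le C$ yields
$$
\|M_{\bt,n}(x)\|_p\le\frac1{n+1}\sum_{k=0}^n|\bt_k|\,\|T^k(x)\|_p\le C\|x\|_p,\quad n=1,2,\dots
$$
Thus $\{M_{\bt,n}(x)\}$ is norm-bounded in $L_p$ and converges in measure to $\widehat x$, whence $\widehat x\in L_p$ by Theorem 1.2 in \cite{cls}.

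The genuine content is packed into the invoked results: the maximal inequalities of Theorems \ref{t4} and \ref{t6}, and especially Proposition \ref{p3}, whose proof relies on the separable-predual hypothesis (through \cite[Theorem 1.22.13]{sa}) and on reduction to trigonometric polynomials via Proposition \ref{p2}. Given these, the only points needing care in the present argument are the density of $L_1\cap\mc M$ in $L_p$ and the passage from a measure limit to an $L_p$ limit. Were the auxiliary results not at hand, the real obstacle would be establishing the sharper maximal inequality of Theorem \ref{t6} for $p\ge2$, which is exactly what distinguishes the a.u. conclusion from the merely b.a.u. one.
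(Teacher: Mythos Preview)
Your proof is correct and follows essentially the same approach as the paper: combine Proposition \ref{p3} (a.u. convergence on $L_1\cap\mc M$) with Corollary \ref{c1} (closedness of the convergence set in $L_p$) and the density of $L_1\cap\mc M$ in $L_p$, then use the uniform $L_p$-bound on the averages together with \cite[Theorem 1.2]{cls} to secure $\widehat x\in L_p$. You have simply expanded the details that the paper leaves implicit.
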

\begin{proof}
In view of Proposition \ref{p3} and Corollary \ref{c1}, we only need to recall that
the set $L_1\cap \mc M$ is dense in $L_p$. The inclusion $\widehat x\in L_p$ follows as in the proof of Theorem \ref{t3}.
\end{proof}

\section{Individual ergodic theorems in noncommutative fully symmetric spaces}

Let $x\in \LM$, and let $\{ e_{\lb}\}_{\lb\ge 0}$ be the spectral family of projections for the absolute value $| x|$ of $x$.
If $t>0$, then the {\it $t$-th generalized singular number of $x$} (see \cite{fk}) is defined as
$$
\mu_t(x)=\inf\{\lb>0: \tau(e_{\lb}^{\perp})\leq t\}.
$$

A Banach space $(E, \| \cdot \|_E)\su \LM$ is called {\it fully symmetric} if the conditions
$$
x\in E, \ y\in \LM, \ \int \limits_0^s\mu_t(y)dt\leq  \int \limits_0^s\mu_t(x)dt \text{ \ for all\ }s>0
$$
imply that $y\in E$ and $\| y\|_E\leq \| x\|_E$. It is known \cite{ddp} that if $(E, \| \cdot \|_E)$ is a fully symmetric space,
$x_n,x\in E$, and $\|x- x_n\|_E\to 0$, then $x_n\to x$ in measure. A fully symmetric space $(E, \| \cdot \|_E)$ is said to possess
{\it Fatou property} if the conditions
$$
x_{\al}\in E^+, \ \ x_{\al}\leq x_{\bt} \text{ \ for } \al \leq \bt, \text{\ and\ }\sup_{\al} \| x_{\al}\|_E<\ii
$$
imply that there exists $x=\sup \limits_{\al}x_{\al}\in E$ and $\| x\|_E=\sup \limits_{\al} \| x_{\al}\|_E$.
The space $(E, \| \cdot \|_E)$ is said to have {\it order continuous norm} if $\| x_{\al}\|_E\downarrow 0$ whenever $x_{\al}\in E$ and $x_{\al}\downarrow 0$.

Let $L_0(0,\ii)$ be the linear space of all (equivalence classes of) almost everywhere finite complex-valued Lebesgue measurable
functions on the interval $(0,\ii)$. We identify $L_{\ii}(0,\ii)$ with the commutative von Neumann algebra acting
on the Hilbert space $L_2(0,\ii)$ via
multiplication by the elements from $L_{\ii}(0,\ii)$ with the trace given by the integration
with respect to Lebesgue measure. A Banach space $E\su L_0(0,\ii)$ is called {\it fully symmetric Banach space on $(0,\ii)$}
if the condition above holds with respect to the von Neumann algebra $L_{\ii}(0,\ii)$.

Let $E=(E(0,\ii), \| \cdot \|_E)$ be a fully symmetric function space.
For each $s>0$ let $D_s: E(0,\infty) \to E(0,\infty)$ be the bounded linear operator given by \\ $D_s(f)(t) = f(t/s), \ t > 0$.
The {\it Boyd indices} $p_E$ and $q_E$ are defined as
$$
p_E=\lim\limits_{s\to\infty}\frac{\log s}{\log \|D_s\|_E}, \ \ q_E=\lim\limits_{s \to +0}\frac{\log s}{\log \|D_{s}\|_E}.
$$
It is known that $1\leq p_E\leq q_E\leq \ii$ \cite[II, Ch.2, Proposition 2.b.2]{lt}.
A fully symmetric function space is said to have {\it non-trivial Boyd indices} if $1<p_E$ and $q_E<\ii$.
For example, the spaces $L_p(0,\ii)$, $1< p<\ii$, have non-trivial Boyd indices:
$$
p_{L_p(0,\infty)} = q_{L_p(0,\infty)} = p
$$
\cite[Ch.4, \S 4, Theorem 4.3]{bs}.

\vskip 5 pt
If $E(0,\ii)$ is a fully symmetric function space, define
$$
E(\mc M)=E(\mc M, \tau)=\{ x\in \LM: \mu_t(x)\in E\}
$$
and set
$$
\| x\|_{E(\mc M)}=\| \mu_t(x)\|_E,  \ x\in E(\mc M).
$$
It is shown in \cite{ddp} that $(E(\mc M), \| \cdot \|_{E(\mc M)})$ is a fully symmetric space.
If $1\leq p<\ii$ and $E=L_p(0,\ii)$, the space $(E(\mc M), \| \cdot \|_{E(\mc M)})$ coincides with the noncommutative
$L_p-$space $(L_p(\mc M, \tau), \| \cdot \|_p)$ because
$$
\| x\|_p=\left (\int \limits_0^{\ii}\mu_t^p(x)dt\right )^{1/p}=\| x\|_{E(\mc M)}
$$
\cite[Proposition 2.4]{ye1}.

It was shown in \cite[Proposition 2.2]{cs} that if $\mc M$ is non-atomic, then every noncommutative fully symmetric $(E, \| \cdot \|_E)\su \LM$ is of the form $(E(\mc M), \| \cdot \|_{E(\mc M)})$ for a suitable fully symmetric  function space $E(0,\infty)$.

Let $L_{p,q}(0,\ii)$, $1\leq p,q<\ii$, be the classical function Lorentz space, that is, the space of all such functions
$f\in L_0(0,\ii)$ that
$$
\| f\|_{p,q}=\left (\int \limits_0^{\ii}(t^{1/p}\mu_t(f))^q\frac {dt}t \right )^{1/q}<\ii.
$$
It is known that for $q\leq p$ the space $(L_{p,q}(0,\ii), \| \cdot \|_{p,q})$ is a
fully symmetric function space with Fatou property and order continuous norm. In addition,
$L_{p,p}=L_p$. In the case $1<p<q$, the function $\| \cdot \|_{p,q}$ is a quasi-norm on $L_{p,q}(0,\ii)$, but
there exists a norm $|| \cdot ||_{(p,q)}$ on $L_{p,q}(0,\ii)$ that is equivalent to the norm $\| \cdot \|_{p,q}$
and such that  $(L_{p,q}(0,\ii), || \cdot ||_{(p,q)})$ is a fully symmetric function space with Fatou property
and order continuous norm \cite[Ch.4, \S4]{bs}.  In addition, if $1 \leq q \leq p < \ii$ ($1 < p  < \ii, \ 1 \leq q < \ii$), then
$$
p_{(L_{p,q}(0,\infty), \|\cdot\|_{p,q})} = q_{(L_{p,q}(0,\infty), \|\cdot\|_{p,q})} = p
$$  \cite[Ch.4, \S 4, Theorem 4.3]{bs}
(resp.,
$$
p_{(L_{p,q}(0,\infty), \|\cdot\|_{(p,q)})} = q_{(L_{p,q}(0,\infty), \|\cdot\|_{(p,q)})} = p
$$
\cite[Ch.4, \S 4, Theorem 4.5]{bs}).

\vskip 5 pt
Using function Lorentz space $(L_{p,q}(0,\ii), \| \cdot \|_{p,q})$ ($(L_{p,q}(0,\ii), \| \cdot \|_{(p,q)})$), one can define
{\it noncommutative Lorentz space}
$$
L_{p,q}(\mc M,\tau)=\left \{ x\in \LM: \| x\|_{p,q}=\left (\int \limits_0^{\ii}(t^{1/p}
\mu_t(x))^q\frac {dt}t\right )^{1/q}<\ii \right  \}
$$
that is fully symmetric with respect to the norm $\| \cdot \|_{p,q}$ for $1\leq q\leq p$ (resp., with respect
to the norm $|| \cdot ||_{(p,q)}$ for $q>p>1$).
In addition, the norm $\| \cdot \|_{p,q}$ (resp., $|| \cdot ||_{(p,q)}$) is order continuous \cite[Proposition 3.6]{ddp1}
and satisfies Fatou property \cite[Theorem 4.1]{ddst}. These spaces were first introduced in the paper \cite{ko}.

Following \cite{kps}, a {\it Banach couple} $(X,Y)$ is a pair of Banach spaces,
$(X,\| \cdot \|_X)$ and $(Y,\| \cdot \|_Y)$, which are algebraically and topologically embedded in a Housdorff topological space.
With any Banach couple $(X,Y)$ the following Banach spaces are associated:

(i) the space $ X\cap Y $ equipped with the norm
$$
\|x\|_{X\cap Y}=\max\{\|x\|_X,\|x\|_Y\}, \ x\in X\cap Y;
$$

(ii) the space $X+Y$  equipped with the norm
$$
\|x\|_{X+Y}=\inf\{\|y\|_X+\|z\|_Y: x=y+z,  y\in X, z \in Y \}, \ x\in X+Y.
$$

Let $(X, Y)$ be a Banach couple. A linear map $T: X+Y \rightarrow X+Y$  is called a
{\it bounded operator for the couple $(X, Y)$} if both $T:X\to X$  and $T:Y\to Y$  are bounded.
Denote by $\mathcal{B}(X,Y)$ the linear space of all bounded
linear operators for the couple $(X, Y)$. Equipped with the norm
$$
\|T\|_{\mathcal{B}(X,Y)}=\max\{ \|T\|_{X \rightarrow X},\|T\|_{Y \rightarrow Y}\},
$$
this space is a Banach space.
A Banach space $Z$ is said to be {\it intermediate for
a Banach couple $(X,Y)$} if
$$
X \cap Y \subset Z \subset X+Y
$$
with continuous inclusions. If $Z$ is intermediate for a Banach couple $(X,Y)$, then it is called
an {\it interpolation space for $(X,Y)$} if every bounded linear operator for the couple
 $(X, Y)$  acts boundedly from $Z$ to $Z$.

If $Z$ is an interpolation space for a Banach couple
$(X, Y)$, then there exists a constant $C>0$ such that $\|T\|_{Z \rightarrow Z}\leq
C \|T\|_{\mathcal{B}(X,Y)}$ for all $T \in \mathcal{B}(X,Y)$.
An interpolation space $Z$ for a Banach couple
$(X,Y)$  is called an {\it exact interpolation space}  if
$\|T\|_{Z \rightarrow Z} \leq \|T\|_{\mathcal{B}(X,Y)}$ for all $T\in {\mathcal{B}(X,Y)}.$

Every fully symmetric function space $E=E(0,\infty)$ is an exact interpolation space for the Banach couple
$(L_1(0,\infty),L_{\ii}(0,\infty))$ \cite[Ch.II, \S 4, Theorem 4.3]{kps}.

\vskip 5 pt
We need  the following noncommutative interpolation result for
the spaces $E(\mc M)$.

\begin{teo}\cite[Theorem 3.4]{ddp}\label{t40}
Let $E,E_1,E_2$ be fully symmetric function spaces on $(0,\infty)$. Let $\mathcal{M}$ be a von Neumann
algebra with a faithful semifinite normal trace. If $(E_1, E_2)$ is a Banach couple and $E$ is an exact interpolation
space for $(E_1, E_2)$, then  $E(\mathcal{M})$ is an exact
interpolation space for the Banach couple $(E_1(\mathcal{M}), E_2(\mathcal{M}))$.
\end{teo}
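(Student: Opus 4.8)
The plan is to route everything through the Peetre $K$-functional and the singular value function $\mu$, which is the natural link between the operator couple $(E_1(\mc M),E_2(\mc M))$ and the function couple $(E_1,E_2)$. For a Banach couple $(A_0,A_1)$ write $K(t,a;A_0,A_1)=\inf\{\|a_0\|_{A_0}+t\|a_1\|_{A_1}:a=a_0+a_1\}$, and recall the fundamental inequality $K(t,Sa;B_0,B_1)\le K(t,a;A_0,A_1)$ whenever $S$ contracts $A_0\to B_0$ and $A_1\to B_1$. Since $E(\mc M)$ is intermediate for $(E_1(\mc M),E_2(\mc M))$ once the corresponding inclusions for $E$ are transferred through $\mu$, to prove exact interpolation it suffices to fix $T$ contracting both $E_1(\mc M)$ and $E_2(\mc M)$ and to show $\|Tx\|_{E(\mc M)}\le\|x\|_{E(\mc M)}$ for all $x\in E(\mc M)$; the assumed exactness of $E$ for $(E_1,E_2)$ will be the only external input on the function side.

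The technical core, and the step I expect to be the main obstacle, is the identity
\begin{equation*}
K(t,x;E_1(\mc M),E_2(\mc M))=K(t,\mu(x);E_1,E_2),\qquad x\in E_1(\mc M)+E_2(\mc M),\ t>0.\tag{K}
\end{equation*}
For the inequality $\ge$ I would take an arbitrary operator splitting $x=a+b$, use the submajorization $\mu(a+b)\prec\prec\mu(a)+\mu(b)$ and the Calder\'on--Mityagin theorem to write $\mu(x)=\mu(a+b)=S(\mu(a)+\mu(b))$ for a substochastic $S$, and set $g_1=S\mu(a)$, $g_2=S\mu(b)$; since $E_1,E_2$ are fully symmetric, hence exact interpolation spaces for $(L_1,L_\infty)$, substochastic maps contract them, so $\|g_1\|_{E_1}\le\|a\|_{E_1(\mc M)}$ and $\|g_2\|_{E_2}\le\|b\|_{E_2(\mc M)}$, and taking the infimum yields $\ge$. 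For $\le$ I would start from a near-optimal function splitting $\mu(x)=g_1+g_2$; using the lattice structure of the couple one may take $g_1,g_2\ge0$ decreasing, and then lift them through the polar decomposition $x=u|x|$ and the functional calculus of $|x|$ to operators $a,b$ with $x=a+b$, $\mu(a)=g_1$, $\mu(b)=g_2$, so that $\|a\|_{E_1(\mc M)}=\|g_1\|_{E_1}$ and $\|b\|_{E_2(\mc M)}=\|g_2\|_{E_2}$. Making the reduction to decreasing pieces and the singular-number bookkeeping precise is where the genuinely noncommutative work lies.

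Granting (K), the conclusion is short. Let $T$ contract both $E_1(\mc M)$ and $E_2(\mc M)$ and fix $x\in E(\mc M)$. The fundamental inequality gives $K(t,Tx;E_1(\mc M),E_2(\mc M))\le K(t,x;E_1(\mc M),E_2(\mc M))$, which by (K) becomes
\begin{equation*}
K(t,\mu(Tx);E_1,E_2)\le K(t,\mu(x);E_1,E_2),\qquad t>0.
\end{equation*}
It remains to deduce $\|\mu(Tx)\|_E\le\|\mu(x)\|_E$, that is, $\|Tx\|_{E(\mc M)}\le\|x\|_{E(\mc M)}$. This is precisely the assertion that $E$ is $K$-monotone for $(E_1,E_2)$ along the pair of decreasing functions $\mu(x),\mu(Tx)$, and it is here that the hypothesis enters: for couples of fully symmetric function spaces the relative Calder\'on--Mityagin property on the cone of decreasing functions allows the $K$-domination to be realized by a single operator contracting $E_1$ and $E_2$ simultaneously, whence exactness of $E$ for $(E_1,E_2)$ delivers the norm bound. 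Taking the supremum over the unit ball of $E(\mc M)$ then yields $\|T\|_{E(\mc M)\to E(\mc M)}\le 1$, which is the asserted exact interpolation; the intermediate-space inclusions transfer from $E$ to $E(\mc M)$ through $\mu$ in the same way.
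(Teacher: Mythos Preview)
The paper does not prove this statement. Theorem~\ref{t40} is quoted from \cite[Theorem~3.4]{ddp} and used as a black box; immediately after the statement the text moves on to apply it, with no argument supplied. There is therefore no proof in the present paper to compare your proposal against.

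On the proposal itself: routing through the $K$-functional and the identity (K) is the natural strategy, and something equivalent to (K) is indeed what \cite{ddp} establishes. But your final step contains a genuine gap. From
\[
K(t,\mu(Tx);E_1,E_2)\le K(t,\mu(x);E_1,E_2)\quad (t>0)
\]
you want to conclude $\|\mu(Tx)\|_E\le\|\mu(x)\|_E$, and to do so you invoke a ``relative Calder\'on--Mityagin property on the cone of decreasing functions'' for the couple $(E_1,E_2)$. This is not part of the hypotheses, and it is not automatic: there exist couples of rearrangement-invariant (hence fully symmetric) function spaces that fail to be Calder\'on couples, so $K$-domination need not be realizable by a single operator contracting both $E_1$ and $E_2$. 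Exact interpolation of $E$ for $(E_1,E_2)$ is a hypothesis about operators, not about $K$-functionals, and by itself does not give $K$-monotonicity of $E$. As written, the last paragraph assumes essentially what has to be proved. The argument in \cite{ddp} avoids this trap by working directly with submajorization and operator transfer rather than passing through the $K$-functional at that stage; if you want to carry out your approach, you would need to produce, for each $x$, a concrete operator on $E_1+E_2$ contracting both $E_i$ and sending $\mu(x)$ to $\mu(Tx)$, and that construction (not the $K$-inequality) is where the assumed exactness of $E$ would then bite.
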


It follows now from \cite[Ch.II, Theorem 4.3]{kps} and Theorem \ref{t40} that every non-commutative fully symmetric space $E(\mathcal{M})$, where $E=E(0,\infty)$ is a fully symmetric function space, is an exact
interpolation space for the Banach couple $(L_1(\mc M), \mc M)$.

\vskip 5pt
Let $T\in DS^+(\mc M,\tau)$. Let $E(0,\ii)$ be
a fully symmetric function space. Since the noncommutative fully symmetric space $E(\mathcal{M})$ is
an exact interpolation space for the Banach couple $(L_1(\mathcal{M}, \tau), \mathcal{M})$, we conclude that
$T(E(\mathcal{M})) \subset E(\mathcal{M})$ and $T$ is a positive  linear contraction  on
$(E(\mathcal{M}), \|\cdot\|_{E(\mathcal{M})})$. Thus
$$
M_n(x)=\frac {1} {n+1} \sum_{k=0}^{n}T^k(x) \in E(\mathcal{M})
$$
for each $x \in E(\mathcal{M})$ and all $n$.
Besides, the inequalities
$$
\|T(x)\|_1 \leq \|x\|_1, \ x \in L_1, \ \ \|T(x)\|_{\ii} \leq \|x\|_{\ii}, \ x \in  \mathcal{M}
$$
imply that
$$
\sup\limits_{n\geq 1}\|M_n\|_{L_1 \rightarrow L_1} \leq 1 \text{ \ and \ }  \sup\limits_{n\geq 1}\|M_n\|_{\mathcal{M} \rightarrow \mathcal{M}} \leq 1.
$$
Since the noncommutative fully symmetric space $E(\mathcal{M})$ is an exact interpolation space for the Banach couple $(L_1(\mathcal{M}, \tau), \mathcal{M})$, we have
\begin{equation}\label{eq42}
\sup\limits_{n\geq 1}\|M_n\|_{E(\mathcal{M}) \rightarrow E(\mathcal{M})} \leq 1.
\end{equation}

Now, let $\{ \bt_k\}_{k=0}^{\ii}\su \Bbb C$  satisfy $|\bt_k| \leq C$, $k=1,2,\dots$. \
As $0\leq Re\bt_k+C\leq 2C$ and $0\leq Im\bt_k+C\leq 2C$, it follows from (\ref{eq4}) that
$$
\sup\limits_{n\geq 1}\|M_{\bt,n}\|_{L_1 \rightarrow L_1} \leq 6C \text{ \ and \ }  \sup\limits_{n\geq 1}\|M_{\bt,n}\|_{\mathcal{M} \rightarrow \mathcal{M}} \leq 6C.
$$
Since the noncommutative fully symmetric space $E(\mathcal{M})$ is an exact interpolation space for the Banach couple $(L_1(\mathcal{M}, \tau), \mathcal{M})$, we obtain
\begin{equation}\label{eq42a}
\sup\limits_{n\geq 1}\|M_{\bt,n}\|_{E(\mathcal{M}) \rightarrow E(\mathcal{M})} \leq 6C.
\end{equation}

The following theorem is a version of Theorem \ref{t3} for
noncommutative fully symmetric Banach spaces with non-trivial Boyd indices.

\begin{teo}\label{t42} Let $E(0,\ii)$ be a fully symmetric function space with Fatou property and non-trivial
Boyd indices. If $T\in DS^+(\mc M, \tau)$, then for any given $x\in E(\mc M,\tau)$
the averages $M_n(x)$ converge b.a.u. to some  $\widehat{x} \in E(\mc M,\tau)$.
If $p_{E(0,\ii)} > 2$, these averages converge a.u.
\end{teo}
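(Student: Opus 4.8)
The plan is to transfer the already-established $L_p$-results (Theorems \ref{t3} and \ref{t4}, and for the a.u.\ statement also Theorem \ref{t6}) to $E(\mc M)$ by a spectral cut-off combined with Lemma \ref{l1}, the non-trivial Boyd indices entering only through an embedding into a sum of two $L_p$-spaces. Since $1<p_E\le q_E<\ii$, I fix $p_0,p_1$ with $1<p_0<p_E$ and $q_E<p_1<\ii$; by Boyd's interpolation theorem $E(0,\ii)$ is an exact interpolation space for $(L_{p_0}(0,\ii),L_{p_1}(0,\ii))$, so $E(0,\ii)\su L_{p_0}(0,\ii)+L_{p_1}(0,\ii)$ and, correspondingly, $E(\mc M)\su L_{p_0}(\mc M)+L_{p_1}(\mc M)$. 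In the case $p_E>2$ I would instead take $2\le p_0<p_E$, so that both $p_0$ and $p_1$ exceed $2$.

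Fix $x\in E(\mc M)$; by linearity of $M_n$ and the stability of b.a.u.\ (a.u.)\ convergence under finite sums, I reduce to $x\ge 0$ via the decomposition into four positive parts. Writing the spectral decomposition $x=\int_0^\ii\lb\,de_\lb$, set for $\lb>0$
$$
v_\lb=\int_0^\lb s\,de_s\in\mc M,\qquad u_\lb=x-v_\lb=\int_\lb^\ii s\,de_s,\qquad t_\lb=\tau(e_\lb^\perp)<\ii .
$$
Here $\|v_\lb\|_\ii\le\lb$, and a direct computation with generalized singular numbers gives $\mu_t(u_\lb)=\mu_t(x)\,\chi_{\{t<t_\lb\}}$ and $\mu_t(v_\lb)=\mu_{t+t_\lb}(x)$. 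Using $E(\mc M)\su L_{p_0}(\mc M)+L_{p_1}(\mc M)$ and the standard description of $L_{p_0}+L_{p_1}$ (with $\mu_t(x)\to0$ as $t\to\ii$, which forces $t_\lb<\ii$), one checks that $u_\lb\in L_{p_0}(\mc M)$ for every $\lb>0$, while $v_\lb\in L_{p_1}(\mc M)$ and $\|v_\lb\|_{p_1}\to 0$ as $\lb\downarrow 0$, since $t_\lb\to\ii$ and the tail of $\mu_\cdot(x)$ lies in $L_{p_1}$.

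Now I would verify the hypotheses of Lemma \ref{l1} for $S_n=M_n$ on $X=E(\mc M)$. Given $\ep>0$, choose $\lb_k\downarrow 0$ and put $x_k=-v_{\lb_k}$, so that $x+x_k=u_{\lb_k}\in L_{p_0}(\mc M)$; as $1<p_0<\ii$, Theorem \ref{t3} yields b.a.u.\ convergence of $\{M_n(x+x_k)\}$, which is (i). For (ii)--(iii), apply the $L_{p_1}$-maximal inequality (Theorem \ref{t4} with $C=1$) to each $x_k$ at a level $\eta_k$: choosing first $\lb_k$ so small that $\|x_k\|_{p_1}$ is negligible and then $\eta_k\to0$, I obtain $e_k\in\PM$ with $\tau(e_k^\perp)\le\ep/2^k$ and $\sup_n\|e_kM_n(x_k)e_k\|_\ii\to 0$. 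Setting $e=\bigwedge_{k\ge1}e_k$ gives $\tau(e^\perp)\le\ep$, and $e\le e_k$ yields $\|eM_n(x_k)e\|_\ii\le\|e_kM_n(x_k)e_k\|_\ii$, so (iii) holds. Lemma \ref{l1} then gives b.a.u.\ convergence of $\{M_n(x)\}$, hence convergence in measure to some $\widehat x\in\LM$; since $\sup_n\|M_n(x)\|_{E(\mc M)}\le\|x\|_{E(\mc M)}$ by (\ref{eq42}) and $E$ has the Fatou property, the norm is lower semicontinuous under convergence in measure, so $\|\widehat x\|_{E(\mc M)}\le\|x\|_{E(\mc M)}$ and $\widehat x\in E(\mc M)$. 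When $p_E>2$ I run the identical argument with $2\le p_0<p_E$ and $p_1>2$, invoking the a.u.\ part of Theorem \ref{t3} on $L_{p_0}$, the one-sided maximal inequality of Theorem \ref{t6} for the pieces $x_k$, and the a.u.\ form of Lemma \ref{l1} (where $\|M_n(x_k)e\|_\ii\le\|M_n(x_k)e_k\|_\ii$ for $e\le e_k$).

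The main obstacle I anticipate is not the ergodic input but the interpolation bookkeeping in the cut-off: one must arrange that the \emph{unbounded} large-value part $u_\lb$ lands in a single fixed space $L_{p_0}$ on which convergence is already known, while the \emph{bounded} small-value part $v_\lb$ has arbitrarily small $L_{p_1}$-norm so that its averages are controlled by the $L_{p_1}$-maximal inequality. Making this rigorous rests on the embedding $E\su L_{p_0}+L_{p_1}$ supplied by the non-trivial Boyd indices, on the decay $\mu_t(x)\to0$ (ensuring $t_\lb<\ii$) and $\|v_\lb\|_{p_1}\to0$, and on confirming that $2\le p_0$ can be arranged precisely when $p_E>2$; verifying these singular-number estimates is the technical heart of the argument.
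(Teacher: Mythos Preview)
Your argument is correct, but it is considerably more elaborate than needed, and the paper's proof is much shorter. The paper observes, exactly as you do, that non-trivial Boyd indices yield $E(\mc M)\su L_p(\mc M)+L_q(\mc M)$ for some $1<p,q<\ii$ (with $2<p,q$ when $p_E>2$). It then simply writes $x=x_1+x_2$ with $x_1\in L_p(\mc M)$, $x_2\in L_q(\mc M)$, applies Theorem~\ref{t3} \emph{to each summand separately}, and adds the two b.a.u.\ (resp.\ a.u.) limits. Finally, Fatou and (\ref{eq42}) place $\widehat x$ back in $E(\mc M)$, just as in your last step.

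The key point you seem to have overlooked is that Theorem~\ref{t3} already gives convergence on \emph{both} $L_{p_0}$ and $L_{p_1}$, since each exponent exceeds $1$. Hence there is no need for a spectral cut-off, for Lemma~\ref{l1}, or for the maximal inequalities of Theorems~\ref{t4} and~\ref{t6}; a single fixed splitting $x=x_1+x_2$ and the stability of b.a.u./a.u.\ convergence under finite sums suffice. Your route does work---the singular-number bookkeeping you sketch (that $u_\lb\in L_{p_0}$ because $\mu_t(u_\lb)$ is supported on a set of finite measure, and that $\|v_\lb\|_{p_1}\to 0$ because the tail of $\mu_t(x)$ lies in $L_{p_1}$) is sound---but it reproduces by hand what a direct appeal to Theorem~\ref{t3} on the second summand gives for free.
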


\begin{proof}
Since $E(0,\ii)$ has non-trivial Boyd indices, according to  \cite[II, Ch.2, Proposition 2.b.3]{lt},
there exist such $1<p, q<\ii$ that the space $E(0,\ii)$ is intermediate for the
Banach couple $(L_p(0,\ii), L_q(0,\ii))$. Since
$$
(L_p+L_q)(\mc M,\tau)=L_p(\mc M, \tau)+L_q(\mc M, \tau)
$$
(see \cite[Proposition 3.1]{ddp}), we have
$$
 E(\mc M,\tau) \su  L_p(\mc M,\tau) +  L_q(\mc M,\tau).
$$
Then $x = x_1 +x_2$, where $x_1\in L_p(\mc M,\tau)$, $x_2\in L_q(\mc M,\tau)$, and, by
Theorem \ref{t3}, there exist such $\widehat{x}_1 \in L_p(\mc M,\tau)$ and $\widehat{x}_1 \in L_q(\mc M,\tau)$
that $M_n(x_j)$  converge b.a.u. to $\widehat{x}_j$, $j=1,2$. Therefore
$$
M_n(x)\to \widehat x=\widehat{x}_1 +\widehat{x}_2 \in L_p(\mc M,\tau) +L_q(\mc M,\tau) \su \LM
$$
b.a.u., hence $M_n(x) \to \widehat x$ in measure.
Since $E(\mathcal{M})$ satisfies Fatou property, the unit ball of $E(\mathcal{M})$
is closed in the measure topology \cite[Theorem 4.1]{ddst}, and (\ref{eq42}) implies that $\widehat x \in E(\mathcal{M})$.

If $p_{E(0,\ii)} > 2$, then the numbers $p$ and $q$ can be chosen such that $2<p, q<\ii$.
Utilizing Theorem \ref{t3} and repeating the argument above, we conclude
that the averages $M_n(x)$ converge to $\widehat x$ a.u.
\end{proof}

Following the proof of Theorem \ref{t42}, we obtain its extended version:

\begin{teo}\label{t46}
Let $E(0,\ii)$ be a fully symmetric function space with Fatou property. If $T\in DS^+$ and $x\in E(\mc M,\tau)$ is such that
$x=x_1+ \dots +x_{n(x)}$, where $x_i\in L_{p_j(x)}(\mc M,\tau)$, $p_j(x)\ge 1$, $j=1,\dots , n(x)$, then the averages
$M_n(x)$ converge b.a.u. to some $\widehat x\in E(\mc M,\tau)$. If $p_j(x)\ge 2$ for all $j=1, \dots , n(x)$, these averages
converge a.u.
\end{teo}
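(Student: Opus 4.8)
The plan is to run the argument of Theorem \ref{t42} essentially verbatim, the two differences being that the decomposition of $x$ is now the finite sum $x=x_1+\dots+x_{n(x)}$ supplied as a hypothesis (rather than extracted from the Boyd indices, which is why only the Fatou property is needed here), and that one or more of the exponents $p_j(x)$ may equal $1$. First I would treat the summands one at a time. Fix $j$ and look at $x_j\in L_{p_j(x)}(\mc M,\tau)$: if $p_j(x)=1$, Theorem \ref{t2} gives that $M_n(x_j)$ converges b.a.u. to some $\widehat{x}_j\in L_1$; if $1<p_j(x)<\ii$, Theorem \ref{t3} gives the same conclusion with $\widehat{x}_j\in L_{p_j(x)}(\mc M,\tau)$. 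Either way each $M_n(x_j)$ converges b.a.u. to a limit $\widehat{x}_j\in L_{p_j(x)}(\mc M,\tau)$.

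Next I would reassemble the pieces. Since a finite sum of b.a.u. convergent sequences is again b.a.u. convergent, we obtain
$$
M_n(x)=\sum_{j=1}^{n(x)}M_n(x_j)\to \widehat x=\sum_{j=1}^{n(x)}\widehat{x}_j\in\sum_{j=1}^{n(x)}L_{p_j(x)}(\mc M,\tau)\su\LM
$$
b.a.u., and hence $M_n(x)\to\widehat x$ in measure. To place the limit back in $E(\mc M)$ I would repeat the closing step of Theorem \ref{t42}: because $E(\mc M)$ is an exact interpolation space for the couple $(L_1(\mc M),\mc M)$, inequality (\ref{eq42}) yields $\|M_n(x)\|_{E(\mc M)}\leq\|x\|_{E(\mc M)}$ for all $n$, so the sequence $\{M_n(x)\}$ stays in a fixed ball of $E(\mc M)$; the Fatou property makes this ball closed in the measure topology \cite[Theorem 4.1]{ddst}, and convergence in measure then forces $\widehat x\in E(\mc M)$.

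For the a.u. statement, the hypothesis $p_j(x)\ge 2$ for all $j$ excludes the exponent $1$, so Theorem \ref{t3} applies to every summand and gives a.u.\ (not merely b.a.u.) convergence of each $M_n(x_j)$; the same finite-intersection argument upgrades the convergence of $M_n(x)$ itself to a.u., with the same limit $\widehat x$.

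I expect no genuine obstacle, as every ingredient is already established. The one point that deserves an explicit line is the elementary stability of a.u.\ and b.a.u.\ convergence under finite sums: given $\ep>0$, for each $j$ pick a projection $e_j$ with $\tau(e_j^{\perp})\leq\ep/n(x)$ on which $M_n(x_j)$ converges uniformly (resp.\ bilaterally uniformly), and set $e=\bigwedge_{j=1}^{n(x)}e_j$; then $\tau(e^{\perp})\leq\ep$, and since $e\leq e_j$ for each $j$ one controls $\|(\widehat x-M_n(x))e\|_{\ii}$ (resp.\ the bilateral norm) by the triangle inequality. This is precisely the mechanism that lets the per-summand conclusions of Theorems \ref{t2} and \ref{t3} assemble into a statement about $x$.
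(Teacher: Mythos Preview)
Your proposal is correct and follows essentially the same route as the paper, which merely says ``Following the proof of Theorem \ref{t42}, we obtain its extended version'' and gives no further details. You have faithfully reproduced that argument, with the only (necessary) addition being the explicit split between the cases $p_j(x)=1$ (handled by Theorem \ref{t2}) and $p_j(x)>1$ (handled by Theorem \ref{t3}); the closure-of-the-ball argument via Fatou property and (\ref{eq42}) is exactly as in Theorem \ref{t42}.
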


Since any function Lorentz space $E=L_{p,q}(0,\ii)$ with $1<p<\ii$ and $1\leq q<\ii$ has non-trivial Boyd indices $p_E=q_E=p$,
we have the following corollary of Theorem \ref{t42}.

\begin{teo}\label{t45}
Let $1<p<\ii$ and $1\leq q<\ii$. Then, given $x\in L_{p,q}(\mc M,\tau)$,
the averages $M_n(x)$ converge b.a.u. to some  $\widehat{x} \in L_{p,q}(\mc M,\tau)$.
If $p> 2$, these averages converge a.u.
\end{teo}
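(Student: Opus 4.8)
The plan is to read Theorem \ref{t45} as nothing more than the specialization of Theorem \ref{t42} to the fully symmetric function space $E=L_{p,q}(0,\ii)$. Accordingly, the entire task reduces to two checks: that this particular $E$ satisfies the hypotheses of Theorem \ref{t42} (fully symmetric, Fatou property, non-trivial Boyd indices), and that the associated noncommutative space $E(\mc M)$ coincides with $L_{p,q}(\mc M,\tau)$. Once both are in place, Theorem \ref{t42} applies verbatim and delivers the conclusion.

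First I would fix the correct norm on $L_{p,q}(0,\ii)$, since this depends on the range of $q$: for $1\leq q\leq p$ one uses $\| \cdot \|_{p,q}$, whereas for $1<p<q$ the functional $\| \cdot \|_{p,q}$ is only a quasi-norm and one must pass to the equivalent norm $\| \cdot \|_{(p,q)}$. In either case, as recorded in the preliminary discussion (via \cite{bs} and \cite{ddst}), the resulting space is a fully symmetric function space with Fatou property. Next I would verify the Boyd indices: by the values quoted above, $p_E=q_E=p$ for both the $q\leq p$ norm and the $q>p$ renorming, so the hypothesis $1<p<\ii$ yields $1<p_E\leq q_E<\ii$, i.e. $E$ has non-trivial Boyd indices. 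Finally, I would identify $E(\mc M)=L_{p,q}(\mc M,\tau)$ directly from the definition of $E(\mc M)$ through the singular numbers $\mu_t(x)$: the membership condition $\mu_t(x)\in L_{p,q}(0,\ii)$ together with $\| x\|_{E(\mc M)}=\| \mu_t(x)\|_E$ is exactly what defines $L_{p,q}(\mc M,\tau)$.

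With these identifications established, Theorem \ref{t42} gives b.a.u. convergence of $M_n(x)$ to some $\widehat x\in E(\mc M)=L_{p,q}(\mc M,\tau)$ for every $x\in L_{p,q}(\mc M,\tau)$. For the last assertion I would observe that $p_{E(0,\ii)}=p$, so the hypothesis $p>2$ is precisely the condition $p_{E(0,\ii)}>2$ in Theorem \ref{t42}, which upgrades the convergence from b.a.u. to a.u. There is no genuine obstacle here, as the statement is a direct corollary; the only point demanding care is the regime $q>p$, where one must remember that $\| \cdot \|_{p,q}$ alone is merely a quasi-norm and invoke the equivalent Banach norm $\| \cdot \|_{(p,q)}$ before appealing to the interpolation-based Theorem \ref{t42}, noting that passage to an equivalent norm leaves the Boyd indices $p_E=q_E=p$ unchanged.
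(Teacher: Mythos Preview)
Your proposal is correct and matches the paper's approach exactly: the paper also obtains Theorem \ref{t45} as an immediate corollary of Theorem \ref{t42}, simply noting that for $1<p<\ii$ and $1\leq q<\ii$ the Lorentz space $L_{p,q}(0,\ii)$ has non-trivial Boyd indices $p_E=q_E=p$. Your added care about the renorming $\|\cdot\|_{(p,q)}$ in the regime $q>p$ is appropriate and is precisely how the paper's preliminary discussion sets things up.
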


\begin{rem}
If $1\leq q \leq p$, then $L_{p,q}(\mc M,\tau) \su L_{p,p}(\mc M,\tau) = L_{p}(\mc M,\tau)$ 
(see \cite{ko} and \cite[Lemma 1.6]{yb}). Then it follows directly from Theorem \ref{t3}
along with the ending of the proof ofthe first part of Theorem \ref{t42} that for every $x\in L_{p,q}(\mc M,\tau)$
the averages $M_n(x)$ converge to some $\widehat x\in L_{p,q}(\mc M,\tau)$  b.a.u. (a.u. for $p\ge 2$).
\end{rem}

The next theorem is a version of  Besicovitch weighted
ergodic theorem for a noncommutative fully symmetric space $E(\mc M,\tau)$.
\begin{teo}\label{t44a}
Assume that $\mc M$ has a separable predual.  Let $E(0,\ii)$ be a fully
symmetric function space with Fatou property and non-trivial
Boyd indices. Let $\{ \bt_k\}$ be a bounded Besicovitch sequence. If $T\in DS^+(\mc M, \tau)$, then for any given $x\in E(\mc M,\tau)$
the averages $M_{\bt,n}(x)$ converge b.a.u. to some  $\widehat{x} \in E(\mc M,\tau)$.
If $p_{E(0,\ii)} > 2$, these averages converge a.u.
\end{teo}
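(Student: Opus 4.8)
The plan is to follow the proof of Theorem \ref{t42} essentially verbatim, replacing the unweighted averages $M_n$ by the weighted averages $M_{\bt,n}$, replacing Theorem \ref{t3} by its Besicovitch-weighted counterpart Theorem \ref{t9}, and replacing the contractivity estimate (\ref{eq42}) by the uniform bound (\ref{eq42a}). Observe that the hypothesis that $\mc M$ has separable predual is imposed precisely so that Theorem \ref{t9} is available; that is the only role it plays.

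First I would invoke \cite[II, Ch.2, Proposition 2.b.3]{lt}: since $E(0,\ii)$ has non-trivial Boyd indices, there exist $1<p,q<\ii$ for which $E(0,\ii)$ is intermediate for the Banach couple $(L_p(0,\ii),L_q(0,\ii))$. Using the identification $(L_p+L_q)(\mc M,\tau)=L_p(\mc M,\tau)+L_q(\mc M,\tau)$ from \cite[Proposition 3.1]{ddp}, this yields $E(\mc M,\tau)\su L_p(\mc M,\tau)+L_q(\mc M,\tau)$, so any $x\in E(\mc M,\tau)$ admits a decomposition $x=x_1+x_2$ with $x_1\in L_p(\mc M,\tau)$ and $x_2\in L_q(\mc M,\tau)$. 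Next I would apply Theorem \ref{t9} separately to $x_1$ and $x_2$: the weighted averages $M_{\bt,n}(x_1)$ and $M_{\bt,n}(x_2)$ converge b.a.u. to some $\widehat x_1\in L_p(\mc M,\tau)$ and $\widehat x_2\in L_q(\mc M,\tau)$, respectively. By linearity of $M_{\bt,n}$, the averages $M_{\bt,n}(x)$ then converge b.a.u., and hence in measure, to $\widehat x=\widehat x_1+\widehat x_2\in L_p(\mc M,\tau)+L_q(\mc M,\tau)\su \LM$.

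To place the limit back in $E(\mc M,\tau)$, I would use (\ref{eq42a}), which gives $\| M_{\bt,n}(x)\|_{E(\mc M)}\leq 6C\,\| x\|_{E(\mc M)}$ for all $n$. Since $E(\mc M)$ has Fatou property, its unit ball—and therefore every ball, as the measure topology is a linear topology—is closed in the measure topology \cite[Theorem 4.1]{ddst}; consequently the measure limit $\widehat x$ satisfies $\| \widehat x\|_{E(\mc M)}\leq 6C\,\| x\|_{E(\mc M)}$, so $\widehat x\in E(\mc M,\tau)$. For the a.u. assertion under the hypothesis $p_{E(0,\ii)}>2$, I would note that $p$ and $q$ above can be chosen with $2<p,q<\ii$, after which the second (a.u.) clause of Theorem \ref{t9} applies to each $x_j$ and the identical argument delivers a.u. convergence.

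I do not expect a genuine obstacle: the only deviation from the proof of Theorem \ref{t42} is the constant $6C$ in (\ref{eq42a}) in place of the constant $1$ in (\ref{eq42}), which is harmless because closedness in the measure topology holds for balls of arbitrary radius, not merely for the unit ball. The substantive content of the theorem is already carried by Theorem \ref{t9} together with the interpolation bound (\ref{eq42a}).
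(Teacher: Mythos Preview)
Your proposal is correct and matches the paper's own approach exactly: the paper simply states that the proof uses Theorem~\ref{t9} and inequality~(\ref{eq42a}) and is analogous to the proof of Theorem~\ref{t42}. Your write-up fills in precisely those details, including the observation that closedness in measure of the unit ball extends to arbitrary balls so that the constant $6C$ causes no difficulty.
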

\noindent
Proof of Theorem \ref{t44a} uses Theorem \ref{t9} and the inequality (\ref{eq42a})  and is analogous to the proof of Theorem \ref{t42}.

 Immediately From Theorem \ref{t44a} we obtain the following individual ergodic theorem 
for Lorentz spaces $L_{p,q}(\mc M,\tau)$ (cf. Theorem \ref{t45}).
\begin{teo}\label{t44}
Let $\mc M$ have a separable predual. If $1 < p <\ii$ and $1 \leq q <\ii$, then for any $x \in L_{p,q}(\mc M,\tau)$
the averages $M_{\bt,n}$
converge b.a.u. to some  $\widehat x \in L_{p,q}(\mc M,\tau)$. If $p >2$, these averages converge a.u.
\end{teo}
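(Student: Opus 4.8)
The plan is to obtain Theorem \ref{t44} as a direct specialization of Theorem \ref{t44a} to the fully symmetric function space $E=L_{p,q}(0,\ii)$. First I would observe that, for this choice of $E$, the noncommutative Lorentz space $L_{p,q}(\mc M,\tau)$ coincides with $E(\mc M,\tau)$, since both are defined by the requirement $\mu_t(x)\in L_{p,q}(0,\ii)$ together with the matching norm; this identification is exactly the definition of $L_{p,q}(\mc M,\tau)$ recorded in the discussion preceding the statement.

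Next I would check that $E=L_{p,q}(0,\ii)$ satisfies every hypothesis of Theorem \ref{t44a}. Equipped with $\| \cdot \|_{p,q}$ when $1\leq q\leq p$ and with the equivalent norm $|| \cdot ||_{(p,q)}$ when $1<p<q$, this space is a fully symmetric function space with Fatou property, as assembled in Section 4, and its Boyd indices satisfy $p_E=q_E=p$ by \cite[Ch.4, \S 4, Theorems 4.3 and 4.5]{bs}. Since $1<p<\ii$, we have $1<p_E$ and $q_E<\ii$, so the Boyd indices are non-trivial and the standing assumptions of Theorem \ref{t44a} are met. The separable-predual hypothesis on $\mc M$ and the bounded Besicovitch sequence $\{ \bt_k\}$ are carried over verbatim.

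With these verifications in place, Theorem \ref{t44a} applied to $E=L_{p,q}(0,\ii)$ immediately yields that for every $x\in L_{p,q}(\mc M,\tau)$ the averages $M_{\bt,n}(x)$ converge b.a.u. to some $\widehat x\in E(\mc M,\tau)=L_{p,q}(\mc M,\tau)$. For the second assertion, the extra hypothesis $p>2$ is precisely the condition $p_{E(0,\ii)}=p>2$ required by the a.u. part of Theorem \ref{t44a}, so a.u. convergence follows as well.

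There is no genuine obstacle here, as all the needed structural properties of the function Lorentz spaces are already collected in the preliminaries; the single point demanding care is the bookkeeping of which norm renders $L_{p,q}(0,\ii)$ fully symmetric in the two regimes $q\leq p$ and $q>p$. This causes no difficulty for the conclusion, however, since the two norms are equivalent, hence define the same underlying space and the same Boyd indices, and the notions of b.a.u. and a.u. convergence depend only on the measure-topological structure of $\LM$ rather than on the particular symmetric norm chosen.
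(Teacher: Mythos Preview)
Your proposal is correct and matches the paper's approach exactly: the paper states that Theorem \ref{t44} follows ``immediately from Theorem \ref{t44a}'' applied to the Lorentz spaces, relying on the previously recorded facts that $L_{p,q}(0,\ii)$ is fully symmetric with Fatou property and has Boyd indices $p_E=q_E=p$. Your additional remarks about the two norm regimes and the identification $E(\mc M,\tau)=L_{p,q}(\mc M,\tau)$ simply make explicit what the paper leaves implicit.
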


\begin{rem}
If $1\leq q \leq p$, then $L_{p,q}(\mc M,\tau) \subset L_{p}(\mc M,\tau)$, and
it follows directly from Theorem \ref{t9}
along with the ending of the proof of the first part of Theorem \ref{t42} that for every $x\in L_{p,q}(\mc M,\tau)$
the averages $M_{\bt,n}$ converge to some $\widehat x\in L_{p,q}(\mc M,\tau)$  b.a.u. (a.u. for $p\ge 2$).
\end{rem}

\section
{Mean ergodic theorems in noncommutative fully symmetric spaces}
Let $\mc M$ be a von Neumann algebra with a faithful normal semifinite
trace $\tau $. In \cite{ye1} the following mean ergodic theorem for noncommutative fully symmetric spaces was proven.

\begin{teo}\label{t51}
Let $E(\mc M)$ be a noncommutative fully symmetric space such that

(i) $L_1\cap \mc M$ is dense in $E(\mc M)$;

(ii) $\|e_n\|_{E(\mc M)} \to 0$ for any sequence of projections $\{e_n\} \su L_1\cap \mc M$ with $e_n \downarrow 0$;

(iii) $\|e_n\|_{E(\mc M)}/\tau(e_n) \rightarrow 0$ for any increasing sequence of projections \\ $\{e_n\} \su  L_1\cap \mc M$
with $\tau(e_n) \to \ii$.

\noindent
Then, given $x \in E(\mc M)$ and $T \in DS^+(\mc M, \tau)$, there exists
$\widehat{x} \in E(\mc M)$ such that $\| \widehat x-M_n(x)\|_{E(\mc M)}\to 0$.
\end{teo}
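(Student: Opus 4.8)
The plan is to prove $\|\cdot\|_{E(\mc M)}$-convergence first on the dense subspace $L_1\cap\mc M$ and then to propagate it to all of $E(\mc M)$ by a Banach--Steinhaus argument, the uniform bound being supplied by (\ref{eq42}). Indeed, by (\ref{eq42}) we have $\sup_n\|M_n\|_{E(\mc M)\to E(\mc M)}\le 1$, so the set $G=\{x\in E(\mc M):\{M_n(x)\}\text{ is }\|\cdot\|_{E(\mc M)}\text{-convergent}\}$ is a closed linear subspace: if $x_j\to x$ in $E(\mc M)$ with $x_j\in G$, then $\|M_n(x)-M_m(x)\|_{E(\mc M)}\le 2\|x-x_j\|_{E(\mc M)}+\|M_n(x_j)-M_m(x_j)\|_{E(\mc M)}$, whence $\{M_n(x)\}$ is Cauchy and, $E(\mc M)$ being complete, convergent. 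By hypothesis (i) it thus suffices to show $L_1\cap\mc M\su G$; the limit $\widehat x$ then lies in $E(\mc M)$ automatically.

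Next I would fix $x\in L_1\cap\mc M$. By Theorem \ref{t2} the averages $M_n(x)$ converge b.a.u., hence in measure, to some $\widehat x\in L_1$; since $\|M_n(x)\|_{\ii}\le\|x\|_{\ii}$ and $\|M_n(x)\|_1\le\|x\|_1$ for all $n$, lower semicontinuity of these norms under convergence in measure gives $\widehat x\in L_1\cap\mc M\su E(\mc M)$. Put $w_n=M_n(x)-\widehat x$, so that $w_n\to0$ in measure while $\sup_n\|w_n\|_{\ii}\le2\|x\|_{\ii}=:K$ and $\sup_n\|w_n\|_1\le2\|x\|_1=:L$. Passing to the generalized singular numbers $f_n(t)=\mu_t(w_n)$, I obtain a sequence of non-increasing functions on $(0,\ii)$ with $f_n\le K$, $\int_0^{\ii}f_n\,dt\le L$, and, because $w_n\to0$ in measure, $f_n(t)\to0$ for a.e. $t$.

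The decisive step is to upgrade this to $\|w_n\|_{E(\mc M)}\to0$, and here the two structural hypotheses enter through the fundamental function $\phi_E(t)=\|\chi_{(0,t)}\|_E$. On the function level (ii) means $\phi_E(0+)=0$ and (iii) means $\phi_E(t)/t\to0$ as $t\to\ii$; taking $\phi_E$ concave (an equivalent renorming) one has $\phi_E'$ non-increasing and $\phi_E'(a)\le\phi_E(a)/a\to0$. Since the Lorentz space $\Lambda_{\phi_E}$ is the smallest fully symmetric space with fundamental function $\phi_E$, I can bound $\|w_n\|_{E(\mc M)}=\|f_n\|_E\le\int_0^{\ii}f_n(t)\,d\phi_E(t)$ and split this at a parameter $a>0$. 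The tail is estimated uniformly in $n$ by $\int_a^{\ii}f_n\,d\phi_E\le\phi_E'(a)\int_a^{\ii}f_n\,dt\le L\,\phi_E'(a)$, which is small by (iii); the head $\int_0^a f_n\,d\phi_E$ tends to $0$ as $n\to\ii$ for fixed $a$ by dominated convergence, since $f_n\to0$ a.e. on $(0,a)$ while $f_n\,\phi_E'\le K\,\phi_E'$ with $\int_0^a\phi_E'\,dt=\phi_E(a)<\ii$ (finiteness guaranteed by (ii)). Letting first $a\to\ii$ and then $n\to\ii$ yields $\|w_n\|_{E(\mc M)}\to0$, i.e. $M_n(x)\to\widehat x$ in $E(\mc M)$, completing the inclusion $L_1\cap\mc M\su G$.

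I expect the main obstacle to be exactly this tail estimate, i.e. controlling the averages over the region of large trace uniformly in $n$: the uniform $L_1$-bound alone does \emph{not} furnish a fixed $E(\mc M)$-integrable majorant for $\{w_n\}$ (one only gets $f_n(t)\le L/t$, which need not lie in $E$), so a naive dominated-convergence argument fails. It is condition (iii) --- via the sublinear growth of $\phi_E$ combined with the majorization by the Lorentz norm of $\Lambda_{\phi_E}$ --- that renders the tail uniformly negligible, while condition (ii) plays the complementary and easier role of securing order continuity near the origin so that the head converges. The reduction and the passage from b.a.u./measure convergence to the three quantitative facts about $f_n$ are routine; the genuine content lies in matching the two hypotheses to the head/tail split of the fundamental-function integral.
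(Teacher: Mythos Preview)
The paper does not contain a proof of Theorem~\ref{t51}: the statement is simply quoted as a known result of Yeadon (the sentence introducing it reads ``In \cite{ye1} the following mean ergodic theorem for noncommutative fully symmetric spaces was proven''), and the text moves on immediately to Corollary~\ref{c51}. There is therefore nothing in the paper to compare your argument against.

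That said, your outline is a reasonable reconstruction of the standard route to this result and is essentially what Yeadon does. A couple of remarks. First, the reduction to $L_1\cap\mc M$ via the uniform bound $\sup_n\|M_n\|_{E(\mc M)\to E(\mc M)}\le 1$ is fine; note that this bound holds for \emph{any} noncommutative fully symmetric space, not only those of the form $E(\mc M,\tau)$ for a function space $E(0,\ii)$, since full symmetry already makes $E(\mc M)$ an exact interpolation space for $(L_1,\mc M)$. Second, the heart of your argument---the head/tail split of $\int_0^{\ii}\mu_t(w_n)\,d\phi_E(t)$ with (ii) controlling the head and (iii) the tail---is correct in spirit, but be careful about two technical points: the Minkowski-type inequality $\|g\|_E\le\int_0^{\ii}g(t)\,d\phi_E(t)$ needs a word of justification (layer-cake plus the triangle inequality for the Bochner integral in $E$), and the passage from conditions (ii)--(iii) on \emph{projections in $\mc M$} to the asserted properties of the scalar function $\phi_E$ requires that projections of all traces occur, i.e.\ is immediate only in the non-atomic case. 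Finally, your phrase ``letting first $a\to\ii$ and then $n\to\ii$'' has the quantifiers in the wrong order; you mean: given $\ep>0$, fix $a$ so large that the tail is $<\ep/2$ uniformly in $n$, then let $n\to\ii$ to make the head $<\ep/2$.

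For contrast, the paper's own mean-ergodic results in this section (Theorem~\ref{t52} and the final Proposition) proceed differently: rather than upgrading measure convergence via the fundamental function, they verify relative weak sequential compactness of $\{M_n(x)\}$ on a fundamental subset and invoke the abstract mean ergodic theorem \cite[Ch.~VIII, \S 5, Corollary~3]{ds}.
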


It is clear that any noncommutative fully symmetric space $(E(\mc M), \|\cdot\|_{E(\mc M)})$ with order continuous norm
satisfies conditions (i) and (ii) of Theorem \ref{t51}.
Besides, in the case of noncommutative Lorentz space $L_{p,q}(\mc M,\tau)$, the inequality $p >1$
together with
$$
\|e\|_{p,q} = \left (\frac pq\right )^{1/q} \tau(e)^{1/p}, \ \ e \in L_1\cap \PM
$$
imply that condition (iii) is also satisfied. Therefore Theorem \ref{t51} entails the following.

\begin{cor}\label{c51}
Let $1<p<\ii$, $1\leq q<\ii$, $T \in DS^+$, and $x \in L_{p,q}(\mc M,\tau)$. Then there exists $\widehat{x} \in L_{p,q}(\mc M,\tau)$
such that $\|\widehat x - M_n(x)\|_{p,q}\to 0$.
\end{cor}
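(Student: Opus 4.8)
The plan is to verify that the noncommutative Lorentz space $L_{p,q}(\mc M,\tau)$ fulfils the three hypotheses of Theorem \ref{t51} and then invoke that theorem directly. Recall from the discussion preceding the corollary that, for $1<p<\ii$ and $1\leq q<\ii$, the space $L_{p,q}(\mc M,\tau)$ is a noncommutative fully symmetric space whose defining norm (the norm $\|\cdot\|_{p,q}$ when $q\leq p$, the equivalent norm $\|\cdot\|_{(p,q)}$ when $q>p$) is order continuous. Since conditions (i) and (ii) of Theorem \ref{t51} hold for every noncommutative fully symmetric space with order continuous norm, these two are immediate; the only point requiring genuine verification is condition (iii).

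To check (iii), I would first compute the norm of a projection. For $e\in L_1\cap\PM$ with $a=\tau(e)<\ii$ one has $\mu_t(e)=\chi_{[0,a)}(t)$, whence
$$
\|e\|_{p,q}^q=\int_0^a t^{q/p}\,\frac{dt}{t}=\int_0^a t^{q/p-1}\,dt=\frac pq\,a^{q/p},
$$
so that $\|e\|_{p,q}=(p/q)^{1/q}\tau(e)^{1/p}$, as stated before the corollary. Now let $\{e_n\}\su L_1\cap\PM$ be increasing with $\tau(e_n)\to\ii$. Then
$$
\frac{\|e_n\|_{p,q}}{\tau(e_n)}=\Big(\tfrac pq\Big)^{1/q}\tau(e_n)^{1/p-1}\longrightarrow 0,
$$
because $p>1$ forces $1/p-1<0$. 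This establishes (iii) for $\|\cdot\|_{p,q}$; when $q>p$, the same conclusion for the equivalent norm $\|\cdot\|_{(p,q)}$ follows at once from the norm equivalence.

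With all three hypotheses verified, Theorem \ref{t51} applies and yields $\widehat x\in L_{p,q}(\mc M,\tau)$ with $\|\widehat x - M_n(x)\|_{E(\mc M)}\to 0$. In the case $q>p$ I would finally pass from the equivalent norm $\|\cdot\|_{(p,q)}$ back to the quasi-norm $\|\cdot\|_{p,q}$ using their equivalence, so that the convergence $\|\widehat x - M_n(x)\|_{p,q}\to 0$ asserted in the statement holds in every case. The only step with any substance is the projection-norm computation underlying condition (iii); everything else is a direct appeal to Theorem \ref{t51} together with the order continuity and fully symmetric structure of $L_{p,q}(\mc M,\tau)$ recalled earlier in the section.
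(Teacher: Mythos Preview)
Your proposal is correct and follows essentially the same route as the paper: verify that order continuity of the Lorentz norm gives conditions (i) and (ii) of Theorem~\ref{t51}, use the explicit formula $\|e\|_{p,q}=(p/q)^{1/q}\tau(e)^{1/p}$ together with $p>1$ to get condition (iii), and then invoke Theorem~\ref{t51}. Your treatment is in fact slightly more detailed than the paper's, since you spell out the singular-number computation behind the projection-norm formula and explicitly address the passage between $\|\cdot\|_{(p,q)}$ and $\|\cdot\|_{p,q}$ when $q>p$.
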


The next theorem asserts convergence in the norm $\|\cdot\|_{E(\mc M)}$ of the averages $M_n(x)$ for any
noncommutative fully symmetric space $(E(\mc M), \|\cdot\|_{E(\mc M)})$ with order continuous norm,
under the assumption that $\tau(\Bbb I) < \ii$.

\begin{teo}\label{t52}
Let $\tau$  be finite, and let $E(\mc M,\tau)$ be a noncommutative fully symmetric space with order continuous norm.
Then for any $x \in E(\mc M)$ and $T \in DS^+$ there exists $\widehat{x} \in E(\mc M)$ such that
$\|\widehat x-M_n(x) \|_{E(\mc M)}\to 0$.
\end{teo}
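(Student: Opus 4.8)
The plan is to prove Theorem \ref{t52} by reducing the general statement to the two dense endpoint cases where convergence is already understood, then using order continuity of the norm to propagate convergence to all of $E(\mc M)$. First I would observe that, because $\tau$ is finite, we have the inclusions $\mc M \su L_2(\mc M,\tau) \su L_1(\mc M,\tau)$, and moreover $L_1\cap \mc M = \mc M$ is dense in $E(\mc M)$ by order continuity of the norm (this is exactly condition (i) of Theorem \ref{t51}, which order continuity guarantees). The operator $T$ acts as a positive linear contraction on $E(\mc M)$ by the interpolation discussion preceding Theorem \ref{t42}, and hence so does each average $M_n$, giving the uniform bound $\sup_n \|M_n\|_{E(\mc M)\to E(\mc M)} \leq 1$ from (\ref{eq42}). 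This uniform boundedness is what will let me transfer convergence from a dense set to the whole space.

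Next I would establish norm convergence of $\{M_n(x)\}$ on a dense subset. The natural candidate, mirroring the proof of Theorem \ref{t3}, is the mean ergodic decomposition in $L_2^h$: the set $\mc H_0 = \{x\in L_2^h : T(x)=x\} + \{x - T(x): x\in L_2^h\}$ is dense in $(L_2^h, \|\cdot\|_2)$ because $T$ is an $L_2$-contraction. On the fixed-point part $M_n(x)=x$ converges trivially, and on coboundaries $y=x-T(x)$ one has $M_n(y) = (n+1)^{-1}(x - T^{n+1}(x))$, which tends to $0$ in $\mc M$-norm whenever $x\in L_2^h\cap \mc M$, hence also in $\|\cdot\|_{E(\mc M)}$ since the $E(\mc M)$-norm is dominated by the uniform norm up to the factor $\|\Bbb I\|_{E(\mc M)}$ (finite because $\tau(\Bbb I)<\ii$). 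Thus $M_n$ converges in $E(\mc M)$-norm on the dense set $(\mc H_1 + i\mc H_1)$, where $\mc H_1$ uses coboundaries of elements of $L_2^h\cap \mc M$.

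Having norm convergence on a dense subset together with the uniform contraction bound (\ref{eq42}), I would finish by a standard $\ep/3$ approximation argument: given $x\in E(\mc M)$ and $\ep>0$, pick $z$ in the dense set with $\|x-z\|_{E(\mc M)}<\ep$, and estimate
$$
\|M_n(x)-M_m(x)\|_{E(\mc M)} \leq \|M_n(x-z)\|_{E(\mc M)} + \|M_n(z)-M_m(z)\|_{E(\mc M)} + \|M_m(z-x)\|_{E(\mc M)},
$$
where the outer terms are each at most $\ep$ by the contraction bound and the middle term is small for large $n,m$ since $\{M_n(z)\}$ is Cauchy. This shows $\{M_n(x)\}$ is Cauchy in the Banach space $E(\mc M)$, hence converges to some $\widehat x\in E(\mc M)$.

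The main obstacle I anticipate is verifying that the dense subset I chose in $L_2^h\cap \mc M$ actually sits densely inside $E(\mc M)$ in the $\|\cdot\|_{E(\mc M)}$-topology, not merely inside $L_2$ in the $\|\cdot\|_2$-topology. The two density statements are genuinely different, and bridging them is where order continuity of the norm does the essential work: for a finite trace one can approximate any $x\in E(\mc M)$ first by bounded truncations (controlled via the generalized singular numbers and order continuity) and then invoke the mean ergodic decomposition at the level of $\mc M$. Care is needed because the $L_2$-density of $\mc H_0$ does not immediately transfer to $E(\mc M)$-density; one must instead run the splitting-and-approximation argument directly with respect to $\|\cdot\|_{E(\mc M)}$, leaning on the comparison $\|\cdot\|_{E(\mc M)} \leq \|\Bbb I\|_{E(\mc M)}\,\|\cdot\|_{\ii}$ on $\mc M$ and the order-continuity-driven approximation of $E(\mc M)$ by $\mc M$.
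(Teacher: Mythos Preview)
Your approach differs from the paper's, and the obstacle you flag at the end is real and is precisely the point where your argument does not close. The mean ergodic decomposition $\mc H_1$ is dense in $L_2$ by a Hilbert-space argument, but you need density in $(E(\mc M),\|\cdot\|_{E(\mc M)})$, and there is no direct comparison between $\|\cdot\|_2$ and $\|\cdot\|_{E(\mc M)}$ in either direction for a general fully symmetric $E$. Worse, the fixed-point component $\{x\in L_2^h:T(x)=x\}$ need not lie in $E(\mc M)$ at all (we only know $\mc M\su E(\mc M)\su L_1$ when $\tau$ is finite, and $L_2$ is not contained in $E(\mc M)$ in general), so $\mc H_1+i\mc H_1$ may not even be a subset of $E(\mc M)$. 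Running the splitting ``directly with respect to $\|\cdot\|_{E(\mc M)}$'' amounts to proving the Banach-space mean ergodic theorem in $E(\mc M)$, and the standard route to that requires exactly the ingredient you are missing: relative weak compactness of the orbits.

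The paper avoids this difficulty by supplying the compactness externally. It notes that $\mc M^+$ is fundamental in $E(\mc M)$ (by order continuity), and for $0\leq x\leq \Bbb I$ one has $0\leq M_n(x)\leq \Bbb I$ for all $n$; then it invokes the fact (from \cite{dps}) that order intervals $[0,y]$ in a fully symmetric space with order continuous norm are weakly compact, so $\{M_n(x)\}$ is relatively weakly sequentially compact. Together with the uniform bound (\ref{eq42}) and $\|T^n(x)/n\|_{E(\mc M)}\to 0$, the abstract mean ergodic theorem of Dunford--Schwartz (Ch.~VIII, \S5, Corollary~3) then yields norm convergence on $\mc M^+$, and hence on all of $E(\mc M)$. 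The weak compactness of order intervals is the key idea your proposal lacks; once you have it, the $L_2$ detour becomes unnecessary.
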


\begin{proof}
Since the trace $\tau$ is finite, we have $\mc M \su E(\mc M, \tau)$. As the norm $\|\cdot\|_{E(\mc M)}$ is order continuous,
applying spectral theorem for selfadjoint operators in $E(\mc M, \tau)$, we conclude that $\mc M$ is
dense in $(E(\mc M, \tau), \|\cdot\|_{E(\mc M)})$. Therefore $\mc M^+$  is a fundamental subset of
$(E(\mc M, \tau), \|\cdot\|_{E(\mc M)})$, that is, the linear span of
$\mc M^+$ is dense in  $(E(\mc M, \tau), \|\cdot\|_{E(\mc M)})$.

Show that the sequence $\{ M_n(x)\}$ is relatively weakly sequentially compact for every $x\in \mc M^+$.
Without loss of generality, assume that $0 \leq x \leq \Bbb I$. Since $T \in DS^+$,
we have $0 \leq M_n(x) \leq M_n(\Bbb I) \leq \Bbb I$ for any $n$. By \cite[Proposition 4.3]{dps}, given \\ $y \in E^+(\mc M, \tau)$, the set $\{a \in E(\mc M, \tau): 0 \leq a \leq y\}$ is weakly compact  in \\ $(E(\mc M, \tau), \|\cdot\|_{E(\mc M)})$, which implies that the sequence $\{M_n(x)\}$ is relatively weakly sequentially compact in $(E(\mc M, \tau), \|\cdot\|_{E(\mc M)})$.

Since $\sup\limits_{n\geq 1}\|M_n\|_{E(\mathcal{M}) \rightarrow E(\mathcal{M})} \leq 1$ (see (\ref{eq42})) and
$$
0 \leq \left \| \frac {T^n(x)}n \right \|_{E(\mc M)} \leq \frac {\|x\|_{E(\mc M)}} n \to 0
$$
whenever $x \in \mc M^+$, the result follows by Corollary 3 in \cite[Ch.VIII, \S 5]{ds}.

\end{proof}

\begin{rem}
In the commutative case, Theorem \ref{t52} was established in \cite{ve}. It was also shown that if $\mc M = L_\infty (0, 1)$,
then for every fully symmetric Banach function space $E(0, 1)$ with the norm that is not order continuous
there exists such $T \in DS^+$ and $x \in E(\mc M)$ that the averages $M_n(x)$ do not converge in $(E(\mc M), \|\cdot\|_{E(\mc M)})$.
\end{rem}

The following proposition is a version of Theorem \ref{t51} for noncommutative fully symmetric space with order continuous norm
with condition (iii) being replaced by non-triviality of the Boyd indices of $E(0,\infty)$. Note that we do not require $T$ to be positive.

\begin{pro}
Let $E(0,\infty)$ be a fully symmetric function space with non-trivial Boyd indices and order continuous norm.
Then for any $x \in E(\mc M, \tau)$ and \\ $T \in DS(\mc M, \tau)$ there exists
such $\widehat x \in E(\mc M, \tau)$ that $\| \widehat x-M_n(x) \|_{E(\mc M)}\to 0$.
\end{pro}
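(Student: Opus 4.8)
The plan is to combine the interpolation structure furnished by the non-trivial Boyd indices with the abstract mean ergodic theorem of Dunford--Schwartz, applied along a dense subset of $E(\mc M,\tau)$, exactly in the spirit of the proof of Theorem \ref{t52}. Positivity of $T$ will play no role; I will use only the two contraction estimates $\|T\|_{L_1\to L_1}\le 1$ and $\|T\|_{\mc M\to \mc M}\le 1$ that define $DS(\mc M,\tau)$.

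First I would record the preliminary reductions. Since $E(0,\ii)$ is fully symmetric, $E(\mc M,\tau)$ is an exact interpolation space for the couple $(L_1(\mc M,\tau),\mc M)$, so $T$ extends to a contraction on $E(\mc M,\tau)$ and $\sup_n\|M_n\|_{E(\mc M)\to E(\mc M)}\le 1$; in particular $\|T^nx/n\|_{E(\mc M)}\le \|x\|_{E(\mc M)}/n\to 0$ for every $x$. Because the norm of $E(0,\ii)$ is order continuous, so is the norm of $E(\mc M,\tau)$, whence $L_1(\mc M,\tau)\cap \mc M$ is dense in $E(\mc M,\tau)$ and the Banach dual is realized by trace duality, $E(\mc M,\tau)^*=E^{\times}(\mc M,\tau)$, the Köthe dual. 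Next, as in the proof of Theorem \ref{t42}, non-triviality of the Boyd indices supplies $1<p<q<\ii$ for which $E(0,\ii)$ is intermediate for $(L_p(0,\ii),L_q(0,\ii))$; transferring to the von Neumann setting (using $(L_p+L_q)(\mc M,\tau)=L_p(\mc M,\tau)+L_q(\mc M,\tau)$) yields the continuous inclusions $L_p(\mc M,\tau)\cap L_q(\mc M,\tau)\su E(\mc M,\tau)\su L_p(\mc M,\tau)+L_q(\mc M,\tau)$, and dually $E^{\times}(\mc M,\tau)\su L_{p^\pr}(\mc M,\tau)+L_{q^\pr}(\mc M,\tau)$.

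The heart of the argument is to show that $\{M_n(x)\}$ is relatively weakly sequentially compact in $E(\mc M,\tau)$ for each $x\in L_1(\mc M,\tau)\cap \mc M$. Fix such an $x$. Since every $M_n$ contracts both $L_1$ and $\mc M$, the interpolation inequality $\|a\|_r\le \|a\|_1^{1/r}\|a\|_\ii^{1-1/r}$ shows that $\{M_n(x)\}$ is bounded in the reflexive spaces $L_p(\mc M,\tau)$ and $L_q(\mc M,\tau)$. Passing to a subsequence, I may assume $M_{n_j}(x)$ converges weakly in $L_p(\mc M,\tau)$ and in $L_q(\mc M,\tau)$; testing the two limits against $L_{p^\pr}(\mc M,\tau)\cap L_{q^\pr}(\mc M,\tau)$ identifies them as a single $y\in L_p(\mc M,\tau)\cap L_q(\mc M,\tau)\su E(\mc M,\tau)$. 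Finally, for arbitrary $z\in E^{\times}(\mc M,\tau)$ I would split $z=z_1+z_2$ with $z_1\in L_{p^\pr}(\mc M,\tau)$ and $z_2\in L_{q^\pr}(\mc M,\tau)$ and invoke the two weak convergences to obtain $\tau(z\,M_{n_j}(x))\to \tau(zy)$; since $E(\mc M,\tau)^*=E^{\times}(\mc M,\tau)$, this is exactly the weak convergence $M_{n_j}(x)\rightharpoonup y$ in $E(\mc M,\tau)$.

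With relative weak sequential compactness of the orbit established, Corollary 3 in \cite[Ch.VIII, \S 5]{ds}, together with $\|T^nx/n\|_{E(\mc M)}\to 0$, yields norm convergence of $M_n(x)$ for every $x$ in the dense set $L_1(\mc M,\tau)\cap \mc M$. Because $\sup_n\|M_n\|_{E(\mc M)\to E(\mc M)}\le 1$, the set of $x\in E(\mc M,\tau)$ for which $\{M_n(x)\}$ converges in norm is closed, hence equals all of $E(\mc M,\tau)$, and the limits lie in $E(\mc M,\tau)$ by construction. The step I expect to be the main obstacle is the transfer of weak compactness from the endpoint spaces to $E(\mc M,\tau)$ in the previous paragraph: it rests on the order-continuity identification $E(\mc M,\tau)^*=E^{\times}(\mc M,\tau)$ and on the embedding of this dual into $L_{p^\pr}(\mc M,\tau)+L_{q^\pr}(\mc M,\tau)$. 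It is precisely here --- rather than through reflexivity of $E(\mc M,\tau)$, which need not hold --- that both hypotheses, order continuity and non-trivial Boyd indices, are genuinely used.
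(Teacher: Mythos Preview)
Your argument is correct, but it takes a longer detour than the paper's. Both proofs start identically: non-trivial Boyd indices produce $1<p,q<\ii$ with $L_p\cap L_q\su E\su L_p+L_q$ continuously, order continuity makes $\mc L:=L_p(\mc M,\tau)\cap L_q(\mc M,\tau)$ (which contains $L_1\cap\mc M$) a fundamental subset of $E(\mc M,\tau)$, and one then feeds the hypotheses of Corollary~3 in \cite[Ch.\,VIII, \S5]{ds}. The difference lies in how relative weak sequential compactness of $\{M_n(x)\}$ is obtained on the fundamental set. The paper simply notes that $T$ is a contraction on the \emph{reflexive} spaces $L_p(\mc M,\tau)$ and $L_q(\mc M,\tau)$, so by the reflexive mean ergodic theorem \cite[Ch.\,VIII, \S5, Corollary~4]{ds} the averages $M_n(x)$ already converge \emph{in norm} in each; the two limits agree (via convergence in measure), and the continuous embedding $\mc L\hookrightarrow E(\mc M,\tau)$ then yields norm convergence of $M_n(x)$ in $E(\mc M,\tau)$ outright --- weak compactness is a trivial consequence. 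You instead extract only weak subsequential limits in $L_p$ and $L_q$ and then transfer weak compactness to $E(\mc M,\tau)$ via the K\"othe duality $E(\mc M,\tau)^*=E^\times(\mc M,\tau)\su L_{p'}(\mc M,\tau)+L_{q'}(\mc M,\tau)$. This is sound, but it imports the noncommutative K\"othe duality \cite{ddp1} and the dual embedding, neither of which the paper needs; the payoff is that your argument makes the use of order continuity more visible (it enters through the identification of the dual, not merely through density). Note also that once the paper has norm convergence on the dense set $\mc L$ together with $\sup_n\|M_n\|_{E(\mc M)\to E(\mc M)}\leq 1$, a direct $3\ep$ argument already finishes the proof; the appeal to Corollary~3 at the end is kept only to mirror the structure of Theorem~\ref{t52}.
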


\begin{proof}
 By \cite[Theorem 2.b.3]{lt}, it is possible to find such $1 < p, q < \ii$ that
$$
L_p(0,\infty) \cap L_q(0,\infty) \subset E(0,\infty) \subset L_p(0,\infty) + L_q(0,\infty)
$$
with continuous inclusion maps. In particular, $\| f \|_{E(0,\infty)} \leq C \| f \|_{L_p(0,\infty) \cap L_q(0,\infty)}$ for all
$f \in L_p(0,\infty) \cap L_q(0,\infty)$ and some $C > 0$. Hence
$$
\| x \|_{E(\mc M, \tau)} \leq C \| x \|_{L_p(\mc M, \tau) \cap L_q(\mc M, \tau)}
$$
for all $x \in \mc L:=L_p(\mc M, \tau) \cap L_q(\mc M, \tau)$. Therefore the space
$\mc L$ is continuously embedded in $E(\mc M, \tau)$.
Besides, it follows as in Theorem \ref{t52} that $\mc L$ is a fundamental subset of $(E(\mc M, \tau), \|\cdot\|_{E(\mc M)})$.

Show that for
everty $x \in \mc L$ the sequence $\{M_n(x)\}$ is relatively weakly sequentially compact in
$(E(\mc M, \tau), \|\cdot\|_{E(\mc M)})$.
Since $p,q>1$, the spaces $L_p(\mc M, \tau)$ and $L_q(\mc M, \tau)$ are reflexive.
As $T \in DS$  and $x \in L_p(\mc M, \tau) \cap L_q(\mc M, \tau)$, we conclude that
the averages $\{M_n(x)\}$ converge in $(L_p(\mc M, \tau), \|\cdot\|_p)$ and in $(L_q(\mc M, \tau), \|\cdot\|_q)$ to
$\widehat x_1 \in L_p(\mc M, \tau)$ and to $\widehat x_2 \in L_q(\mc M, \tau)$, respectively \cite[Ch.VIII, \S 5, Corollary 4]{ds}. This implies that the sequence $\{M_n(x)\}$ converges to $\widehat{x}_1$ and to $\widehat{x}_2$ in measure,
hence $\widehat x_1 = \widehat x_2 := \widehat x$. Since $\mc L$ is continuously embedded in $E(\mc M, \tau)$, the sequence $\{M_n(x)\}$ converges to $\widehat x$ with respect to the norm \ $\|\cdot\|_{E(\mc M)}$,
thus, it is relatively weakly sequentially compact in $(E(\mc M, \tau), \|\cdot\|_{E(\mc M)})$.

Now we can proceed as in the ending of the proof of Theorem \ref{t52}.
\end{proof}


\begin{thebibliography}{99}

\bibitem{bs} C. Bennett, R. Sharpley, {\bf Interpolation of Operators}, Academic Press Inc. (London) LTD, 1988.

\bibitem{cl} V. Chilin, S. Litvinov, Uniform equicontinuity for sequences of homomorphisms into the ring of measurable operators, {\it Methods of Funct. Anal. Top.}, 12 (2)(2006), 124-130.

\bibitem{cls} V. Chilin, S. Litvinov, and A. Skalski, A few remarks in non-commutative ergodic theory, {\it J. Operator Theory}, 53 (2)(2005), 331-350.

\bibitem{cs} V. I. Chilin, F. A. Sukochev,  Weak convergence in non-commutative symmetric spaces, {\it  J. Operator Theory},  31 (1994), 35-65.

\bibitem{col} D. \c C\" omez, S. Litvinov, Ergodic averages with vector-valued Besicovitch weights, {\it Positivity}, 17 (2013), 27-46.

\bibitem{ddp} P. G. Dodds, T. K. Dodds, and B. Pagter, Fully symmetric operator spaces, {\it Integr. Equat. Oper. Theory}, 15 (1992),
942-972.

\bibitem {ddp1} P. G. Dodds, T. K. Dodds, and B. Pagter, Noncommutative Kothe duality, {\it Trans. Amer. Math. Soc.},  339(2) (1993), 717-750.

\bibitem {ddst} P. G. Dodds, T. K. Dodds, F. A. Sukochev, and O. Ye. Tikhonov, A Non-commutative Yoshida-Hewitt theorem and convex sets of measurable operators closed locally in measure, {\it  Positivity},  9 (2005), 457-484.

\bibitem {dps} P. G. Dodds, B. Pagter and F. A. Sukochev, Sets of uniformly absolutely continuous norm  in symmetric spaces of measurable operators, {\it  Trans. Amer. Math. Soc.},  (2014) (to appear).

\bibitem{ds} N. Dunford and J. T. Schwartz, {\bf Linear Operators, Part I: General Theory}, John Willey and Sons, 1988.

\bibitem{jx} M. Junge, Q. Xu, Noncommutative maximal ergodic theorems, {\it J. Amer. Math. Soc.}, 20 (2)(2007), 385-439.

\bibitem{fk} T. Fack, H. Kosaki,  Generalized $s$-numbers of $\tau$-mesaurable operators, {\it Pacific J. Math.}, 123(1986), 269-300.

\bibitem{ka} R. V. Kadison, A generalized Schwarz inequality and algebraic invariants for operator algebras, {\it Ann. of Math.} (2), 56(1952), 494-503.

\bibitem{ko} H. Kosaki, Non-commutative Lorentz spaces associated with a simi-finite von Neumann algebra and applications, {\it Proc. Japan DSad., Ser A},  57 (1981), 303-306.

\bibitem{kps} S. G. Krein, Ju. I. Petunin, and E. M. Semenov, {\bf Interpolation of Linear Operators},
{\it Translations of Mathematical Monographs, Amer. Math. Soc.}, \textbf{54}, 1982.

\bibitem {lt} J. Lindenstraus, L. Tsafriri, {\bf Classical Banach spaces I-II}, Springer-Verlag, Berlin Heidelberg New York.
1977.

\bibitem{li} S. Litvinov, Uniform equicontinuity of sequences of measurable operators and non-commutative ergodic theorems, {\it Proc. of Amer. Math. Soc.}, 140 (2012), 2401-2409.

\bibitem{ne}  E. Nelson, Notes on non-commutative integration, {\it J. Funct.  Anal.},
15 (1974), 103-116.

\bibitem{pa} V. Paulsen, {\bf Completely Bounded Maps and Operator Algebras}, Cambridge University Press, 2002.


\bibitem{sa} S. Sakai, {\bf $C^*-$algebras and $W^*-$algebras}, Springer-Verlag, Berlin Heidelberg New York, 1971.

\bibitem{se} I. E. Segal, A non-commutative extension of abstract
integration, {\it Ann. of Math.},  57 (1953), 401-457.

\bibitem{ve} A. Veksler, An ergodic theorem in symmetric spaces, {\it Subirsk. Mat. Zh},  24 (1985), 189-191 (in Russian).

\bibitem{yb} H. Yanhou, T. N. Bekjan, The dual on noncommutative Lorentz spaces, {\it Acta Math. Sci.},  31 B(5)(2011), 2067-2080.

\bibitem{ye1} F. J. Yeadon, Non-commutative $L^p-$spaces, {\it Math. Proc. Camb. Phil. Soc.},  77(1975), 91-102.

\bibitem{ye} F. J. Yeadon, Ergodic theorems for semifinite von Neumann algebras-I, {\it J. London Math. Soc.}, 16 (2)(1977), 326-332.

\bibitem{ye2} F. J. Yeadon, Ergodic theorems for semifinite von Neumann algebras: II, {\it Math. Proc. Camb. Phil. Soc.}, 88 (1980), 135-147.

\end{thebibliography}
\end{document}